\newtheorem*{thm*}{Theorem}
\newtheorem{thm}{Theorem}[section]
\newtheorem{cor}[thm]{Corollary}
\newtheorem{lemma}[thm]{Lemma}
\newtheorem{prop}[thm]{Proposition}
\newtheorem{teoA}{Theorem}
\newtheorem{propB}[teoA]{Theorem}
\newtheorem{teoAprima}{Theorem}
\theoremstyle{definition}
\newtheorem{definition}[thm]{Definition}
\theoremstyle{remark}
\newtheorem{remark}[thm]{Remark}
\newtheorem*{notation}{Notation}
\newtheorem{example}[thm]{Example}
\newcommand{\W}{\mathcal{W}}
\newcommand{\F}{\mathcal{F}}
\DeclareMathOperator{\id}{id}
\DeclareMathOperator{\length}{length}
\begin{document}

\author[Nancy Guelman]{Nancy Guelman} 
\address{IMERL, Facultad de Ingeniería, Universidad de la Rep\'ublica, Uruguay}
\email{nguelman@fing.edu.uy}

\author[Santiago Martinchich]{Santiago Martinchich} 
\address{CMAT, Facultad de Ciencias, Universidad de la Rep\'ublica, Uruguay \vspace{-0.25cm}}

\address{LMO, Université Paris-Saclay, 91405, Orsay, France}
\email{smartinchich@cmat.edu.uy}

\title[Uniqueness of minimal unstable lamination]{Uniqueness of minimal unstable lamination for discretized Anosov flows}


\begin{abstract}
We consider the class of partially hyperbolic diffeomorphisms $f:M\to M$ obtained as the discretization of topological Anosov flows. We show uniqueness of minimal unstable lamination for these systems provided that the underlying Anosov flow is transitive and not orbit equivalent to a suspension. As a consequence, uniqueness of quasi-attractors is obtained.
If the underlying Anosov flow is not transitive we get an analogous finiteness result provided that the restriction of the flow to any of its attracting basic pieces is not a suspension.
A similar uniqueness result is also obtained for certain one-dimensional center skew-products.
\end{abstract}

\maketitle

\section{Introduction} 

A diffeomorphism $f$ in a closed manifold $M$ is called \emph{partially hyperbolic} if there exists a $Df$-invariant continuous decomposition $$TM=E^s \oplus E^c \oplus E^u$$ such that vectors in $E^s$ and $E^u$ are uniformly contracted for future and past iterates of $f$, respectively, and vectors in $E^c$ have an intermediate behavior (a precise definition is given in Section \ref{prelim1}). 

A partially hyperbolic diffeomorphism $f:M\to M$ with $\dim(E^c)=1$ is a \emph{discretized Anosov flow} if there exist a \emph{topological Anosov flow} $\varphi_t:M\to M$ (definition given in Section \ref{prelim2}) and a continuous function $\tau:M\to \mathbb{R}_{>0}$ such that $$f(x)=\varphi_{\tau(x)}(x)$$ for every $x$ in $M$.

Discretized Anosov flows form a class of partially hyperbolic diffeomorphims that naturally contains $C^1$-perturbations of the time 1 map of Anosov flows (see Section \ref{prelim2}). But not only that, it has been recently shown in \cite{BFFP} that discretized Anosov flows form a somewhat large class, comprising the whole homotopic to the identity class of partially hyperbolic diffeomorphism of certain 3-manifolds.

This work fits in the study of the dynamics of discretized Anosov flows. In particular, on the finiteness and uniqueness of \emph{quasi-attractors} and \emph{quasi-repellers} (definition in Section \ref{prelim2}).  

It is well known that the bundles $E^u$ and $E^s$ uniquely integrate to $f$-invariant foliations (see e.g. \cite{HPS}). We denote them as the \emph{unstable} $\W^u$ and \emph{stable} $\W^s$ foliations, respectively. Since quasi-attractors are pairwise disjoint compact sets saturated by $\W^u$-leaves then each one of them contains at least one minimal set for the foliation $\W^u$. Thus, uniqueness (resp. finiteness) of \emph{minimal unstable laminations} implies uniqueness (resp. finiteness) of quasi-attractors.

Finiteness of minimal unstable laminations is obtained in \cite{CPS} for a $C^1$-open and dense subset of partially hyperbolic diffeomorphisms with one-dimensional center. In this text we aim to a more global (non-perturbative) study involving uniqueness/finiteness results for whole classes of examples.

It is worth pointing out that we focus on minimal unstable laminations and quasi-attractors but the results have obvious analogous statements for minimal stable laminations and quasi-repellers.

\subsection{Discretized transitive Anosov flows}

Discretized Anosov flows with arbitrary number of attractors and repellers can be obtained by perturbing the time 1 map of an Anosov diffeomorphism's suspension: as $M$ fibers over the circle and $\varphi_1$ preserves fibers one can perturb $\varphi_1$ so that it becomes Morse-Smale or even a dynamics with infinitely many quasi-attractors in the base (see Example \ref{exSusp} for details).

In \cite{BG2} examples of Axiom A discretized Anosov flows $f(x)=\varphi_{\tau(x)}(x)$ with a proper attractor and a proper repeller have been built as a discretization of any transitive Anosov flow $\varphi_t:M\to M$ provided that $\dim(M)=3$. 

Recall that two flows are said to be \emph{orbit equivalent} whenever there exists a homeomorphism taking orbits of one into orbits of the other and preserving its orientation. We obtain:

\begin{teoA}\label{thmA}
Let $f(x)=\varphi_{\tau(x)}(x)$ be a discretized Anosov flow. Suppose $\varphi_t$ is transitive and not orbit equivalent to a suspension. Then $f$ has a unique minimal unstable lamination.
\end{teoA}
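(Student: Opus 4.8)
The plan is to read everything about $\W^u$ through the underlying Anosov flow. Recall the basic structure of a discretized Anosov flow $f(x)=\varphi_{\tau(x)}(x)$: the center foliation $\W^c$ is the orbit foliation of $\varphi_t$, the center--unstable foliation $\W^{cu}$ coincides with the weak--unstable foliation $\F^{wu}$ of $\varphi_t$, and inside each weak--unstable leaf $L=\F^{wu}(x)$ the unstable leaf $\W^u(x)$ of $f$ is a complete cross--section, i.e.\ it meets every $\varphi$--orbit contained in $L$ (symmetrically $\W^s\subseteq\W^{cs}=\F^{ws}$, each leaf a cross--section of its weak--stable leaf). The immediate consequence I would use repeatedly is that flowing $\W^u(x)$ by $\varphi_t$ sweeps out all of $L$; hence for any $\W^u$--saturated set $\Lambda$ the flow--saturation $\widehat\Lambda:=\bigcup_{t\in\mathbb{R}}\varphi_t(\Lambda)$ is again $\W^u$--saturated, and it contains the entire weak--unstable leaf through each of its points.

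The geometric input coming from the hypotheses on $\varphi_t$ is the following: a transitive Anosov flow that is not orbit equivalent to a suspension has \emph{minimal} strong unstable foliation --- every strong unstable leaf is dense in $M$ --- and in particular its weak unstable foliation is minimal (this is classical, and it is exactly here that ``transitive'' and ``not a suspension'' are used, since in a suspension the strong unstable leaves are trapped inside the torus fibres). Let now $\Lambda$ be any $\W^u$--minimal lamination of $f$ and $x\in\Lambda$. Then $\widehat\Lambda$ contains the weak--unstable leaf $L=\F^{wu}(x)$, which is dense in $M$. Since $\F^{wu}$ and $\F^{ws}$ are transverse, a dense weak--unstable leaf necessarily crosses every weak--stable leaf of $\varphi_t$ (near a point $p$ of a weak--stable leaf $S$ one finds local sheets of $L$ arbitrarily close to $p$, transverse to the local sheet of $S$ through $p$, hence meeting it); therefore $\widehat\Lambda$ meets every weak--stable leaf. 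Finally, as weak--stable leaves are $\varphi_t$--invariant, $\varphi_{t_0}(\Lambda)\cap S\neq\emptyset$ forces $\Lambda\cap S\neq\emptyset$. This proves the key lemma: \emph{every $\W^u$--minimal lamination of $f$ meets every weak--stable leaf of $\varphi_t$.}

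To conclude, suppose $\Lambda_1\neq\Lambda_2$ are two $\W^u$--minimal laminations; being minimal for the same foliation they are disjoint. Fix a weak--stable leaf $S$ and, by the lemma, pick $a\in\Lambda_1\cap S$ and $b\in\Lambda_2\cap S$. Using that $\W^s(a)$ is a complete cross--section of $S$, flow $b$ to the unique point $c=\varphi_s(b)$ lying on $\W^s(a)$; then $c\in\widehat\Lambda_2$, $c\in\W^s(a)$, and consequently $d(f^na,f^nc)\to 0$. One then upgrades this stable synchronisation to an actual coincidence point: iterating forward, passing to a common accumulation point $p$ of $\{f^{n}a\}$ and $\{f^{n}c\}$, and using that $f$ preserves $\W^u$--saturation together with the $\W^u$--minimality of $\Lambda_1$ (so that $\overline{\W^u(a)}=\Lambda_1$ accumulates on $p$), one should produce a point common to $\Lambda_1$ and some $f^{m}(\Lambda_2)$. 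Since $f$ permutes the $\W^u$--minimal laminations, after passing to a power (or by a direct argument in the non--$f$--invariant case) this yields $\Lambda_1\cap\Lambda_2\neq\emptyset$, hence $\Lambda_1=\Lambda_2$: a contradiction.

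I expect the genuinely delicate step to be this last one --- converting ``meets every weak--stable leaf'' into a bona fide intersection of two minimal laminations. The obstruction is that a $\W^u$--minimal lamination of $f$ need not be flow--saturated; indeed it can be a proper subset of $M$ (the Axiom~A examples of \cite{BG2} built over any transitive Anosov flow have a proper attractor, which is precisely such a lamination), so one cannot simply intersect flow--saturations, which are always dense. The argument must therefore keep careful track of the flow--time discrepancy between $\W^u$ and the strong unstable foliation --- presumably working inside the weak leaves via the cross--section structure, or in the orbit space of $\varphi_t$ --- and it is also at this stage that one has to justify working with $f$--invariant (finite unions of) minimal laminations rather than a single, a priori non--invariant one.
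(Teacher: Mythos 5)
Your structural observations are correct and in line with the paper (in particular, the fact that $\W^u(x)$ meets every orbit inside $\F^{wu}(x)$ is exactly Corollary~\ref{cor2.3}, and your key lemma is a non-uniform version of Proposition~\ref{prop1}), but the concluding step, which you yourself flag as ``the genuinely delicate step,'' is a genuine gap, and as written the strategy cannot close. The problem is that nothing in your last paragraph uses the hypothesis that $\varphi_t$ is not orbit equivalent to a suspension. Your key lemma (every $\W^u$-minimal lamination meets every weak-stable leaf) is true for \emph{any} transitive Anosov flow, including suspensions, and for the discretizations of Example~\ref{exSusp} there are genuinely distinct minimal unstable laminations; so an argument that proceeds from that lemma alone to $\Lambda_1\cap\Lambda_2\neq\emptyset$ would prove a false statement. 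Concretely, the point $c=\varphi_s(b)$ lies only in the flow-saturation $\widehat{\Lambda}_2$, which is a dense set carrying essentially no information, and there is no control on the flow-time $s$ nor on how the $f$-orbit of $c$ drifts along $\W^c$ relative to $\Lambda_2$; ``passing to a common accumulation point $p$ of $\{f^n a\}$ and $\{f^n c\}$'' puts $p$ in (an iterate of) $\Lambda_1$ but gives no reason for $p$ to belong to $\Lambda_2$. This is exactly where a suspension would escape, and the sketch offers no mechanism that distinguishes the two cases.

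The paper takes an essentially dual route and this is what you are missing. Rather than trying to force an intersection, it shows that the \emph{existence} of two distinct minimal unstable laminations $A\neq A'$ implies that the orbit foliation $\W^c$ admits a global section, which is forbidden precisely when $\varphi_t$ is not orbit equivalent to a suspension. The mechanism is: first upgrade your lemma to the uniform statement of Proposition~\ref{prop1} (there is a single $L$ so that $\W^c_L(x)$ meets every minimal unstable lamination, for every $x$); then, fixing an orientation of $\W^c$, decompose $M$ into the four pieces $[A]_c$, $(A,A')_c$, $[A']_c$, $(A',A)_c$ according to the pattern of consecutive hits of $A$ and $A'$ along center segments, prove via the unstable-holonomy transport of Lemma~\ref{lemma2.1} that the middle two are open and the outer two closed (Proposition~\ref{prop22}), build from this a continuous $\rho:M\to S^1$ whose lift strictly increases by more than $1$ over any center arc of length $4L$, and finally invoke Schwartzman's argument to produce a smooth closed transversal. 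That construction is the actual content of the theorem and is entirely absent from your sketch; your attempted route, which avoids constructing a section and tries instead to contract two laminations onto each other through a shared stable leaf, does not have a visible way of feeding in the non-suspension hypothesis.

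A secondary remark: you invoke minimality of the strong unstable foliation of $\varphi_t$ as the place where ``not a suspension'' enters, but you then only use minimality of the \emph{weak} unstable foliation (density of $\F^{wu}(x)$), which holds for every transitive Anosov flow, suspension or not. So even the point at which you intend the hypothesis to act is not actually where it acts in your argument; this is another symptom of the same gap.
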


\begin{cor}\label{corA}
Any $f$ as in Theorem \ref{thmA} has at most one quasi-attractor.
\end{cor}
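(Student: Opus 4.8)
The plan is to reduce Theorem~\ref{thmA} to a statement about the flow and then study how a minimal unstable lamination is positioned with respect to the orbits of $\varphi_t$. Since $\tau$ is strictly positive, the bundle $E^u$ of $f$ coincides with the strong unstable direction of $\varphi_t$, so $\W^u$ is exactly the strong unstable foliation of $\varphi_t$, and its saturation by the orbits of $\varphi_t$, the center-unstable foliation $\W^{cu}$, is the weak unstable foliation of $\varphi_t$; in particular the minimal unstable laminations of $f$ are precisely the minimal sets of the foliation $\W^u$, a notion that does not involve $\tau$. The input I take from the theory of Anosov flows is that, $\varphi_t$ being transitive, the weak unstable foliation $\W^{cu}$ is minimal, i.e.\ its only nonempty closed saturated subset is $M$ (this needs only transitivity; the hypothesis that $\varphi_t$ is not a suspension will enter at exactly one later step). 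It then suffices to show that every minimal set $\Lambda$ of $\W^u$ equals $M$.

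If $\mathrm{int}\,\Lambda\neq\emptyset$ this is quick: the interior and boundary of a saturated set are saturated, so $\overline{\mathrm{int}\,\Lambda}$ is a nonempty closed $\W^u$-saturated subset of $\Lambda$, hence all of $\Lambda$ by minimality; thus $\partial\Lambda$ has empty interior, but it is closed and saturated, so minimality forces $\partial\Lambda=\emptyset$, and $\Lambda$ is open and closed, so $\Lambda=M$. Assume then $\mathrm{int}\,\Lambda=\emptyset$, and set $S=\{t\in\mathbb{R}:\varphi_t(\Lambda)=\Lambda\}$. Since each $\varphi_t(\Lambda)$ is again a minimal set of $\W^u$, distinct ones are disjoint, so $S$ is a closed subgroup of $(\mathbb{R},+)$ and $\varphi_t(\Lambda)\cap\Lambda\neq\emptyset$ precisely when $t\in S$. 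If $S=\mathbb{R}$ then $\Lambda$ is $\W^u$-saturated and flow-invariant, hence $\W^{cu}$-saturated, so $\Lambda=M$ by minimality of $\W^{cu}$, contradicting $\mathrm{int}\,\Lambda=\emptyset$. If $S=c\mathbb{Z}$ with $c>0$, then $\bigcup_{t\in[0,c]}\varphi_t(\Lambda)$ is compact and $\W^{cu}$-saturated, hence equals $M$, so the sets $\varphi_t(\Lambda)$, $t\in[0,c)$, partition $M$; a compactness argument using only continuity of $\varphi_t$ shows the induced map $g\colon M\to\mathbb{R}/c\mathbb{Z}$ is continuous, whence $(x,s)\mapsto\varphi_s(x)$ identifies the mapping torus of $\varphi_c|_\Lambda$ with $M$, conjugating the suspension flow to $\varphi_t$ — contradicting that $\varphi_t$ is not orbit equivalent to a suspension. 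This is the only use of that hypothesis, and it is essential: for a suspension this case genuinely occurs, with $\Lambda$ a proper cross-section.

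There remains the case $S=\{0\}$, which is the crux. Then $\varphi_t(\Lambda)\cap\Lambda=\emptyset$ for every $t\neq0$, so each orbit of $\varphi_t$ meets $\Lambda$ in at most one point; together with $\W^u$-saturation this forces $\Lambda$ to meet each weak unstable leaf in exactly one strong unstable leaf, so locally $\Lambda$ is the product of a strong unstable plaque with the graph of a continuous function over a closed subset of a transversal to the flow inside a weak stable plaque. If that closed subset has nonempty interior, then by minimality of $\Lambda$ the set $\Lambda$ is a compact surface transverse to $\varphi_t$, so $\bigcup_t\varphi_t(\Lambda)$ is open and dense with $\W^{cu}$-saturated complement, contradicting minimality of $\W^{cu}$. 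The main obstacle is to exclude the remaining possibility, that $\Lambda$ is an exceptional minimal set, transversally a Cantor set in the weak stable direction, with dense flow-saturation; I would attack this by exploiting expansiveness of $\varphi_t$ and the uniform contraction of the weak stable direction to contradict compactness of $\Lambda$ against density of its flow-saturation, or else by showing via the closing lemma and density of periodic orbits that such a $\Lambda$ must contain a periodic orbit, which is incompatible with $S=\{0\}$. Ruling out $S=\{0\}$ leaves $S=\mathbb{R}$, hence $\Lambda=M$, so $\W^u$ has $M$ as its unique minimal lamination; Corollary~\ref{corA} follows at once, since each quasi-attractor is $\W^u$-saturated — hence contains a minimal unstable lamination — while quasi-attractors are pairwise disjoint.
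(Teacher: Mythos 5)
Your concluding sentence---each quasi-attractor is $\W^u$-saturated and hence contains a minimal unstable lamination, while quasi-attractors are pairwise disjoint, so uniqueness of the latter gives uniqueness of the former---is exactly the paper's (one-line) deduction of Corollary \ref{corA} from Theorem \ref{thmA}, and it is correct. Everything before it, however, is a flawed attempt to re-prove Theorem \ref{thmA} itself rather than to use it, and it contains errors that make the argument irreparable as stated.

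The fatal problem is that your strategy proves too much: you aim to show that every minimal set $\Lambda$ of $\W^u$ equals $M$. This is false under the hypotheses of Theorem \ref{thmA}. The paper explicitly cites the examples of \cite{BG2}, which are discretizations $f=\varphi_\tau$ of \emph{transitive non-suspension} Anosov flows in dimension 3 having a proper attractor and a proper repeller; the proper attractor is a compact $\W^u$-saturated set with empty interior and is the unique minimal unstable lamination. So Theorem \ref{thmA} asserts uniqueness, not that the minimal lamination is the whole manifold. Tracing this through your trichotomy on $S=\{t:\varphi_t(\Lambda)=\Lambda\}$ reveals a second error: the dichotomy rests on the claim that $\W^u$ (the strong unstable foliation of $f$) is the strong unstable foliation of $\varphi_t$, and hence that $\varphi_t$ maps $\W^u$-leaves to $\W^u$-leaves, making each $\varphi_t(\Lambda)$ again a minimal set of $\W^u$. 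This is not justified and is generally false: $\varphi_t$ is only a \emph{topological} Anosov flow (it need not have a well-defined strong unstable foliation at all), and even when it is a smooth Anosov flow, a discretization $f=\varphi_\tau$ with nonconstant $\tau$ typically has an unstable bundle $E^u$ distinct from that of $\varphi_t$, and $\varphi_t$ does not preserve $\W^u(f)$. Indeed, in the BG2 example the attractor is not flow-invariant (were it, it would be $\W^{cu}$-saturated and hence all of $M$), which is precisely why $S$ fails to be well defined there. Finally, you acknowledge that the case $S=\{0\}$, which you call the crux, is left unproven: the sketched appeal to expansiveness or a closing lemma is not an argument. So even setting aside the first two issues, the proof is incomplete. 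If your intent was to give a proof of Corollary \ref{corA} \emph{assuming} Theorem \ref{thmA}, your last sentence alone suffices and matches the paper; if your intent was to give an independent proof of Theorem \ref{thmA}, the present approach cannot work, and the paper's actual proof proceeds very differently, via Propositions \ref{prop1} and \ref{prop2}: it uses minimality of $\W^{cs}$ and the quasi-isometric action on $\W^c$ to show every minimal unstable lamination meets every short center segment, and then shows that the existence of two such laminations forces $\W^c$ to admit a global section, so $\varphi_t$ would be a suspension.
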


Theorem \ref{thmA} is already known from \cite{HU} for discretized Anosov flows in a $C^1$-neighborhood of the time 1 map of a transitive Anosov flow that is not orbit equivalent to a suspension. 

We point out that our proof relies on a different approach. The main inspiration for the present work comes from \cite{BG1} where it was shown that every Axiom A discretized Anosov flow as in the hypothesis of Theorem \ref{thmA} admits a unique attractor. By generalizing the arguments in \cite{BG1} (see also \cite{Gue}) we are able to remove the Axiom A hypothesis and to obtain not only uniqueness of quasi-attractor but also of minimal unstable lamination. 

In the case $f$ is chain-transitive Corollary \ref{corA} gives us no new information but uniqueness of minimal unstable lamination is still interesting. It implies, for example, that the supports of all $u$-Gibbs measures have non-trivial intersection as the support of any such a measure is a $\W^u$-saturated compact set. In \cite[Theorem 1.2]{HU} more precise consequences are obtained.

\subsection{Discretized non-transitive Anosov flows}
For $f(x)=\varphi_{\tau(x)}(x)$ a discretized Anosov flow such that $\varphi_t$ is not transitive the problem reduces to study the behavior of $\varphi_t$ on the attracting basic pieces of $\varphi_t$.

For example, the time 1 map of the Franks-Williams's  non-transitive Anosov flow \cite{FW} can be perturbed to obtain arbitrary number of quasi-attractors (see Example \ref{exFW}). The unique attractor $\Lambda$ in this example verifies that $\varphi_t|_\Lambda:\Lambda\to \Lambda$ is orbit equivalent to a suspension so one can essentially perform in a neighborhood of $\Lambda$ the same perturbation as mentioned above for the time 1 map of an Anosov's suspension.

On the other hand, the arguments for obtaining Theorem \ref{thmA} are also valid in restriction to any non-suspension basic attracting piece. We obtain:

\begin{teoAprima}\label{thmA'}
Let $f(x)=\varphi_{\tau(x)}(x)$ be a discretized Anosov flow. Then every attracting basic piece $\Lambda$ of $\varphi_t$ such that $\varphi_t|_\Lambda$ is not orbit equivalent to a suspension admits a unique minimal unstable lamination for $f$.
\end{teoAprima}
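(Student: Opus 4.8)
The plan is to prove Theorem~\ref{thmA'} by running, in restriction to $\Lambda$, exactly the argument that establishes Theorem~\ref{thmA} (itself a generalization, removing Axiom~A, of the proof in \cite{BG1}). So the first task is to set up the relative picture. I would record the structural features of a discretized Anosov flow $f(x)=\varphi_{\tau(x)}(x)$ that are used throughout: the center foliation $\W^c$ of $f$ is the orbit foliation of $\varphi_t$, the map $f$ preserves each $\varphi_t$-orbit, and $f$ maps every weak-unstable leaf of $\varphi_t$ to itself (since $\varphi_{\tau(x)}(x)$ stays on the weak-unstable leaf of $x$); moreover $\W^u$ is a one-dimensional sub-foliation of the weak-unstable foliation $\W^{cu}_\varphi$ of the flow, transverse inside each weak-unstable leaf to the orbit foliation. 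An attracting basic piece $\Lambda$ is a hyperbolic attractor of $\varphi_t$, hence saturated by strong-unstable, and so by weak-unstable, leaves of the flow; therefore $\Lambda$ is $\W^u$-saturated, $\W^c$-saturated and $f$-invariant, and a trapping region for $\varphi_t$ around $\Lambda$ is also a trapping region for $f$ (because $\tau$ is bounded below by a positive constant and above by compactness), so $\Lambda$ is a topological attractor for $f$ as well. Consequently the minimal unstable laminations of $f$ contained in $\Lambda$ are precisely the minimal sets of $\W^u|_\Lambda$, and what has to be shown is that any two of them coincide.

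Next I would translate the hypothesis. Since $\varphi_t|_\Lambda$ is transitive and not orbit equivalent to a suspension, classical results on hyperbolic flows (Plante in the closed-manifold case, Bowen for general basic sets) give that $\varphi_t|_\Lambda$ is topologically mixing; from mixing one gets that strong-stable and strong-unstable manifolds of $\varphi_t$ are dense in $\Lambda$, hence every weak-stable and every weak-unstable leaf of $\varphi_t$ is dense in $\Lambda$. Beyond this density, the genuine content of ``not a suspension'' is that there is no coherent positive time-ordering compatible with the flow on $\Lambda$; this is exactly what obstructs the perturbations producing many quasi-attractors (cf. the suspension example and Example~\ref{exFW}) and is the point where the argument of \cite{BG1} truly uses the hypothesis.

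For the core, I would argue by contradiction: suppose $\Lambda$ contains two distinct minimal sets $\mathcal{U}_1,\mathcal{U}_2$ of $\W^u$, so by minimality $\mathcal{U}_1\cap\mathcal{U}_2=\emptyset$. For a weak-unstable leaf $L$ of $\varphi_t$ meeting $\mathcal{U}_i$, the trace $\mathcal{U}_i\cap L$ is a proper, closed, $\W^u|_L$-saturated subset of $L$; the homeomorphism $f|_L$ preserves $\W^u|_L$, pushes points strictly forward along the orbit foliation of $L$, and expands $\W^u|_L$ topologically. Passing to the (one-dimensional, possibly non-Hausdorff) leaf space of $\W^u|_L$, one obtains a topologically expanding homeomorphism for which the traces of $\mathcal{U}_1$ and $\mathcal{U}_2$ are disjoint closed invariant subsets. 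Using density of weak-unstable leaves in $\Lambda$ — so that a single $L$ can be taken to pass arbitrarily close to both minimal sets — together with the non-suspension input, which prevents the two traces from staying on ``opposite sides'' of the flow direction inside $L$ under iteration by $f$, one is forced to conclude that the traces collide after finitely many iterates, a contradiction. Hence $\mathcal{U}_1=\mathcal{U}_2$; the same analysis shows that every $\W^u$-leaf of $\Lambda$ accumulates on this unique minimal set, as in \cite{BG1}.

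The main obstacle is exactly the last step, which in \cite{BG1} leaned on Axiom~A. With Axiom~A one has a spectral decomposition and a filtration, so only finitely many basic pieces and finitely many transition chains need be analyzed; without it, a $\W^u$-minimal set can be dynamically intricate and there is no a priori finiteness, so the ``traces must collide'' assertion has to be derived directly from $\W^u$-minimality. The delicate cases are the cylindrical (and Möbius) weak-unstable leaves of $\varphi_t$, around whose two ends the traces could a priori spiral without ever meeting; here one must use the local product structure of $\Lambda$ to manufacture the needed unstable holonomies, and one must keep track of the reparametrization relating $f$ to $\varphi_t$ along orbits. Making this collision argument robust is where the real work lies; once it is available for $\varphi_t$ on a closed manifold (the proof of Theorem~\ref{thmA}), it carries over, word for word, to $\varphi_t|_\Lambda$ and yields Theorem~\ref{thmA'}.
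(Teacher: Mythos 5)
Your top-level strategy — run the proof of Theorem~\ref{thmA} in restriction to $\Lambda$ — is indeed the paper's, but the mechanism you describe for that proof is not what the paper does, and you explicitly leave its core unproved. The paper does not argue leaf-by-leaf inside weak-unstable leaves $L$, does not pass to the (non-Hausdorff) leaf space of $\W^u|_L$, and does not derive a ``collision of traces''. Instead, given two disjoint minimal unstable laminations $A,A'\subset\Lambda$, it decomposes $\Lambda$ along the oriented center foliation into
$[A]_c\cup(A,A')_c\cup[A']_c\cup(A',A)_c$, shows (via the first-hitting function $S$ and transport by unstable holonomy, Lemma~\ref{lemma2.1}) that the middle two pieces are open and the outer two closed, and then builds a continuous circle-valued map $\rho$ whose lift increases by more than $1$ along center arcs of fixed length. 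A Schwartzman-type smoothing then produces a global section transverse to $\W^c$, i.e.\ to the flow lines, which forces $\varphi_t|_\Lambda$ to be orbit equivalent to a suspension — contradiction. The non-suspension hypothesis thus enters only at the very end, as the contradiction; it is never used as a local obstruction preventing traces from ``staying on opposite sides''. You concede that ``making this collision argument robust is where the real work lies'' and that it ``carries over, word for word'' from Theorem~\ref{thmA}; but since your description of Theorem~\ref{thmA}'s argument does not match the actual one, this defers exactly the step whose content you have misidentified.

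There is also a technical point specific to Theorem~\ref{thmA'} that your sketch skips over and that the paper must handle: $\Lambda$ is not a manifold, so one cannot directly apply the Schwartzman argument to $\rho:\Lambda\to S^1$. The paper extends $\rho$ continuously (via a partition of unity and Tietze) to a $\varphi_t$-forward-invariant open neighborhood $U$ of $\Lambda$ on which the lifted inequality persists, smooths the data there, and obtains a global forward section for $\varphi_t|_U$ and hence a global section for $\varphi_t|_\Lambda$. This extension is the extra work in passing from the closed-manifold case to the basic-piece case; your proposal does not address it. Finally, two smaller slips: you invoke topological mixing and density of \emph{strong} stable/unstable leaves, but the paper only needs transitivity of $\varphi_t|_\Lambda$ (to get $\W^{cs}|_\Lambda$ minimal); and your aside that $\W^u$ is one-dimensional is not part of the hypotheses — only $\dim E^c=1$ is assumed, and the argument is dimension-free in $E^u$.
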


\begin{cor}\label{corA'}
Let $f(x)=\varphi_{\tau(x)}(x)$ be a discretized Anosov flow. Suppose that all attracting basic pieces $\Lambda_1$, \ldots, $\Lambda_k$ of $\varphi_t$ verify that $\varphi_t|_{\Lambda_i}$ is not orbit equivalent to a suspension. Then $f$ has exactly $k$ minimal unstable laminations (and exactly $k$ quasi-attractors). Moreover, each one of them is contained in one of the attracting basic pieces $\Lambda_1$, \ldots, $\Lambda_k$.
\end{cor}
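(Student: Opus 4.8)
The plan is to deduce Corollary \ref{corA'} from Theorem \ref{thmA'} together with the standard structure theory of partially hyperbolic diffeomorphisms and of Axiom A flows. First I would recall that every quasi-attractor $Q$ of $f$ is a compact $\W^u$-saturated set, hence contains a minimal unstable lamination; conversely, I would argue (this is part of what Theorem \ref{thmA'} and the discussion in the introduction set up) that every minimal unstable lamination is contained in a quasi-attractor, so it suffices to count minimal unstable laminations. The key geometric input is that the chain recurrent set of $f$ projects, via the orbit equivalence defining the discretized Anosov flow, onto the chain recurrent set of $\varphi_t$, which for an Axiom A flow decomposes into finitely many basic pieces; the attracting ones are exactly $\Lambda_1,\dots,\Lambda_k$. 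Since unstable leaves of $f$ are contained in (weak) unstable leaves of $\varphi_t$, and since any $\W^u$-minimal set is chain recurrent for $f$ and its forward orbit must accumulate on an attracting piece, every minimal unstable lamination of $f$ is contained in the $f$-invariant set lying over some $\Lambda_i$.

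Next I would localize: for each attracting basic piece $\Lambda_i$, let $U_i$ be an isolating (filtrating) neighborhood with $\bigcap_{n\ge 0} f^n(\overline{U_i})$ equal to the $f$-invariant lift of $\Lambda_i$; by the hypothesis that $\varphi_t|_{\Lambda_i}$ is not orbit equivalent to a suspension, Theorem \ref{thmA'} applies and gives exactly one minimal unstable lamination inside $\Lambda_i$. Because the $\Lambda_i$ are pairwise disjoint and exhaust the attracting part of the chain recurrent set, the minimal unstable laminations of $f$ are in bijection with $\{\Lambda_1,\dots,\Lambda_k\}$, giving exactly $k$ of them, each contained in one $\Lambda_i$. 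The count of quasi-attractors then follows: each $\Lambda_i$ contains at least one quasi-attractor (it is itself $\W^u$-saturated, compact, and chain-stable), and it cannot contain two, since a quasi-attractor contains a minimal unstable lamination and there is only one such lamination in $\Lambda_i$; so there are exactly $k$ quasi-attractors, one per attracting basic piece.

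The main obstacle I expect is not the finiteness bookkeeping but making rigorous the claim that every minimal unstable lamination of $f$ is "captured" by some attracting basic piece $\Lambda_i$ — i.e. that there are no minimal unstable laminations sitting over non-attracting parts of the chain recurrent set of $\varphi_t$ (or over the wandering dynamics). This requires knowing that a $\W^u$-minimal set $\mathcal{L}$, being $f$-forward-invariant and $\W^u$-saturated, has its forward iterates converging (in Hausdorff distance) to a set contained in an attractor of $f$, which in turn projects to an attracting basic piece of $\varphi_t$; one shows $\mathcal{L}$ itself already lies in that attractor because $\mathcal{L}$ is closed and $f$-invariant and the attractor is the intersection of the forward images of its isolating neighborhood. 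Once this ``drainage into attracting pieces'' is established, everything else is the formal counting argument above combined with Theorem \ref{thmA'} applied piece by piece.
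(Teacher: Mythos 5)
Your counting bookkeeping is fine, but the heart of the argument — ruling out minimal unstable laminations outside $\Lambda_1\cup\cdots\cup\Lambda_k$ — rests on a step that does not hold. You assert that a minimal unstable lamination $\mathcal{L}$ is ``closed and $f$-invariant'' and argue that its forward iterates therefore drain into some attractor. But a minimal set of the foliation $\W^u$ need not be $f$-invariant: $f$ maps $\W^u$-minimal sets to $\W^u$-minimal sets, but it can permute them. (Compare Example~\ref{exSusp}, where one can arrange the base dynamics to have an exceptional minimal Cantor set whose individual points are not fixed; the corresponding unstable laminations are permuted by $f$.) Without $f$-invariance of $\mathcal{L}$, the ``drainage into an attractor'' argument collapses. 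You also implicitly lean on an Axiom~A--type decomposition of the chain recurrent set of $f$ itself, but $f$ need not be Axiom~A — this is precisely the hypothesis the paper works to remove — so one cannot appeal to a spectral decomposition for $f$.

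The paper's proof avoids both issues by working purely at the level of foliations. It first notes that $\W^{cs}(\Lambda_1)\cup\cdots\cup\W^{cs}(\Lambda_k)$ is open and dense (a property of the flow $\varphi_t$, Remark~\ref{rmkAF}), and that by quasi-isometry and Corollary~\ref{cor2.3} this set equals $\W^s(\Lambda_1)\cup\cdots\cup\W^s(\Lambda_k)$. A compactness argument with finitely many $\W^u$-foliation boxes produces a uniform $R>0$ with $\W^u(x)\cap\bigl(\W^s_R(\Lambda_1)\cup\cdots\cup\W^s_R(\Lambda_k)\bigr)\neq\emptyset$ for every $x$. Applying this to $f^{-n}(x)$ and using backward contraction along stable leaves shows $\overline{\W^u(x)}$ meets $\Lambda_1\cup\cdots\cup\Lambda_k$ for every $x$ — with no appeal to $f$-invariance of any minimal set. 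Finally, if $A$ is a minimal unstable lamination meeting some $\Lambda_i$, then $A\cap\Lambda_i$ is a nonempty compact $\W^u$-saturated subset of $A$ (since $\Lambda_i$ is compact and $\W^u$-saturated), hence $A\subset\Lambda_i$ by minimality. Then Theorem~\ref{thmA'} applied to each $\Lambda_i$ gives exactly one minimal unstable lamination per piece. You should replace the chain-recurrence/``drainage'' step with this density-plus-contraction argument.
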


Discretized non-transitive Anosov flows in the hypothesis of Corollary \ref{corA'} can be constructed using the techniques from \cite{FW} (see also \cite{BBY}). We briefly sketch their construction in Section \ref{section6}.

\subsection{Skew-products}

We say that $f:M\to M$ is a \emph{ partially hyperbolic skew-product} if it admits an $f$-invariant center foliation $\W^c$ such that $M$ is a fiber bundle with $M/\W^c$ as base and the leaves of $\W^c$ as fibers. If $\dim(E^c)=1$, we say that $(M,\W^c)$ is the \emph{trivial bundle} if $\W^c$ is topologically equivalent to the foliation $\{\cdot\}\times S^1$ in $M/\W^c\times S^1$. We say that $(M,\W^c)$ is a \emph{virtually trivial bundle} if it is the trivial bundle modulo finite cover. Examples of non-trivial skew-products that are virtually trivial and not virtually trivial can be found in \cite{BW}.

The proof of Theorem \ref{thmA} will follow from the more general statements of Proposition \ref{prop1} and Proposition \ref{prop2} (see Section \ref{sectionproofs}). As a consequence of these propositions we recover also the uniqueness of minimal unstable lamination result of \cite{HP} when the bundle is non-trivial in dimension 3 and we extend it to any dimension:

\begin{propB}\label{propB}
Let $f:M\to M$ be a partially hyperbolic skew-product with $\dim(E^c)=1$ such that the induced dynamics in the space of center leaves, $F:M/\W^c\to M/\W^c$, is transitive. If $(M,\W^c)$ is not a virtually trivial bundle then 
$f$ admits a unique minimal unstable lamination and a unique quasi-attractor.
\end{propB}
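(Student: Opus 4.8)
The plan is to deduce Theorem B from the same two propositions (Proposition \ref{prop1} and Proposition \ref{prop2}) that will be used to prove Theorem \ref{thmA}, so the core of the argument is to check that a transitive skew-product with non-virtually-trivial bundle satisfies their hypotheses. First I would recall the dichotomy for minimal unstable laminations: each quasi-attractor contains at least one $\W^u$-minimal set, so it suffices to prove there is a unique $\W^u$-minimal set. Suppose for contradiction there are two distinct minimal unstable laminations $\Lambda_1\neq\Lambda_2$. The key structural fact to exploit is that in a skew-product the center foliation $\W^c$ is by circles and $M\to M/\W^c$ is a genuine fiber bundle; hence a $\W^u$-saturated compact set, being also (after the standard center-stable/center-unstable saturation argument) $\W^{cu}$-saturated in an appropriate sense, projects to an $F$-invariant compact subset of $M/\W^c$. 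Using transitivity of $F$, I would argue that the projections of $\Lambda_1$ and $\Lambda_2$ are both dense, hence all of $M/\W^c$, so both $\Lambda_i$ meet every center leaf.

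The heart of the matter is then a separation/ordering argument along center circles. On a center leaf $c\cong S^1$, the traces $\Lambda_1\cap c$ and $\Lambda_2\cap c$ are disjoint nonempty closed sets. The strategy, following the philosophy of \cite{BG1}, is to pick on each center circle a distinguished "gap" or boundary point between the two laminations — e.g. using that $f$ acts on each center leaf and the relative position of $\Lambda_1,\Lambda_2$ — and to show this choice is continuous and $f$-equivariant, producing an $f$-invariant continuous section, or more precisely a finite-to-one $f$-invariant sub-bundle, of $M\to M/\W^c$. Concretely I expect one builds from the cyclic order on $c$ an $f$-invariant family of finitely many points per fiber (the "frontier" of the region between $\Lambda_1$ and $\Lambda_2$), and this family is a compact $\W^c$-saturated… no: a compact set meeting each center circle in a finite set, invariant under $f$ and under center holonomy, i.e. a finite cover of $M/\W^c$ sitting inside $M$. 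Splitting $S^1$ along these marked points trivializes the circle bundle on a finite cover, contradicting the assumption that $(M,\W^c)$ is not virtually trivial.

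The main obstacle I anticipate is making the "distinguished point between $\Lambda_1$ and $\Lambda_2$" construction genuinely canonical and continuous across all of $M/\W^c$: a priori $\Lambda_1\cap c$ and $\Lambda_2\cap c$ can be complicated Cantor-like sets, they may interleave in many complementary intervals, and their combinatorial position can jump. The resolution should be that, because $\Lambda_1$ and $\Lambda_2$ are distinct \emph{minimal} laminations they are disjoint, so $c\setminus(\Lambda_1\cup\Lambda_2)$ has complementary intervals each of which has its two endpoints either in the same $\Lambda_i$ or one in each; the intervals "separating" $\Lambda_1$ from $\Lambda_2$ form a nonempty finite $f$-invariant and holonomy-invariant collection (finiteness coming from a uniform-size argument via compactness and the local product structure), and their endpoints give the desired finite sub-bundle. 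Once this finite $f$-invariant set transverse to $\W^c$ is in hand, the virtually-trivial conclusion is essentially the definition, and the contradiction with the hypothesis closes the proof; the transitive case of Theorem \ref{thmA} for flows-not-orbit-equivalent-to-suspensions will be the exact analogue with $M/\W^c$ replaced by the orbit space of $\varphi_t$ and circles replaced by flow lines, which is why both follow from Propositions \ref{prop1} and \ref{prop2}.
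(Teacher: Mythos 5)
Your overall plan --- deduce Theorem B from Propositions \ref{prop1} and \ref{prop2} after checking their hypotheses for skew-products --- is exactly the paper's, and the preliminary steps you mention (dynamical coherence via \cite{BoBo}, minimality of $\W^{cs}$ from transitivity of $F$, quasi-isometric action because center leaves have bounded length) all appear in the paper's proof. The gap is in the separation argument, which is precisely the content of Proposition \ref{prop2}.

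You correctly anticipate that the main obstacle is making the ``distinguished separating points'' vary continuously across $M/\W^c$, but the resolution you propose --- a uniform-size/compactness argument --- only yields finiteness of separating arcs per center circle (which is indeed true: each such arc has length at least $d(\Lambda_1,\Lambda_2)>0$, and center lengths are bounded). It does not give continuity. The ``first $\Lambda_2$-point after a given $\Lambda_1$-point'' function $S$ is only lower semicontinuous and genuinely jumps on a dense set, so there is no reason the endpoints of separating arcs form a closed subset of $M$, let alone a sub-bundle. Controlling those jumps is exactly what Lemma 5.3 of the paper does: transport by unstable holonomy combined with $\W^u$-minimality of $A$ forces the jump of $S$ to land entirely inside $[A']_c$, which is what makes $(A,A')_c$, $(A',A)_c$ open and $[A]_c$, $[A']_c$ closed. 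The paper then avoids extracting a submanifold from the endpoint combinatorics altogether: it builds a continuous $\rho:M\to S^1$ that winds by more than a full turn along any center arc of length $4L$, smooths it by a Schwartzman argument to obtain a genuine smooth closed codimension-one global section $N$, deduces that the return map $\alpha:N\to N$ has uniform finite period $K$ (by transversality of the smooth section together with bounded center length), and exhibits $(x,\theta)\mapsto\phi^c_\theta(x)$ as an explicit $K{:}1$ cover $N\times S^1\to M$. You should also treat the case $\W^c$ non-orientable by lifting to the orientable double cover and checking, as the paper does, that minimality of $\W^{cs}$ persists on the lift.
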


In fact, Theorem \ref{propB} is also valid if we exchange the hypothesis `skew-product' for `$\W^c$ uniformly compact'. We will precise this in Section \ref{sectionproofs}.

In dimension 3, examples of partially hyperbolic skew-products with a proper attractor and a proper repeller such that $\W^c$ is given by the fibers of any non-trivial bundle over $M/\W^c=\mathbb{T}^2$ are constructed in \cite{Shi}.

It is worth noting the marked correspondence between skew-products and discretized Anosov flows concerning the uniqueness and existence results. The trivial bundle case, the uniqueness of minimal unstable lamination result of \cite{HP} (extended in Theorem \ref{propB}) and the examples of \cite{Shi} mirror the suspension case, Theorem \ref{thmA} and the examples of \cite{BG2}, respectively.

Notice that the hypothesis `$F:M/\W^c\to M/\W^c$ transitive' in Theorem \ref{propB} is somewhat natural since in this setting $F$ is a \emph{topological Anosov homeomorphism} that preserves two topologically transverse contracting/expanding continuous foliations $\W^{cs}|_{M/\W^c}$ and $\W^{cu}|_{M/\W^c}$. A potential Theorem B', in analogy with Theorem \ref{thmA'}, would involve dealing with attracting basic pieces of a non-transitive $F$. 

\subsection{Classical examples in dimension 3 and beyond}

The `classical examples' (in the sense of Pujals's conjecture and \cite{BW}) of partially hyperbolic diffeomophisms in dimension 3 are skew-products,  \emph{deformations of Anosov diffeomorphisms} (those that are homotopic to Anosov in $\mathbb{T}^3$) and discretized Anosov flows.

For deformations of Anosov diffeomorphisms uniqueness of minimal stable and unstable lamination is proved in \cite{Po}. Existence of a proper quasi-attractor is unknown (see \cite[Question 2]{Po}).

With Theorem \ref{thmA} and Theorem \ref{thmA'} we complete in a certain sense the uniqueness and finiteness problem for the classical examples in dimension 3 modulo the structure of $\W^c$. In particular, the existence of infinetely many minimal unstable laminations is always associated with a region (the whole manifold or some proper attracting region) where $\W^c$ `looks like' a suspension flow.

Beyond the classical examples, the first non-dynamically coherent examples were obtained in \cite{HHU}. They identified the existence of a periodic torus tangent to $E^s\oplus E^c$ or $E^c\oplus E^u$ as a possible obstruction for dynamically coherence. Notice that such a torus is necessary an attractor or a repeller. In \cite{HP2} it was shown that examples with this type of tori have periodic regions homeomorphic to $\mathbb{T}^2\times (0,1)$ in the complement of these tori. In these regions $E^c$ integrates to $f$-invariant `interval fibers' transverse to $\mathbb{T}^2\times\{\cdot\}$ and the dynamics is of the type Anosov times identity. So essentially the existence of minimal unstable laminations or quasi-attractors for this type of examples is similar to the trivial skew-product and suspension's of Anosov map scenarios.

More recently, the realm of classical examples has been enlarged with new challenging examples (see for example \cite{BPP}, \cite{BGP} and \cite{BGHP}). It is natural to ask if results of uniqueness/finiteness of minimal unstable laminations/quasi-attractors are also valid for whole classes of these new examples.

\subsection{Acknowledgments} The authors would like to thank S. Crovisier and R. Potrie (co-directors of the second author) for their constant attention on the progress of this work and fruitful discussions.

S. Martinchich was partially supported by `ADI 2019' project funded by IDEX Paris-Saclay ANR-11-IDEX-0003-02, Proyecto CSIC `Iniciación a la Investigación' No 133 and Proyecto CSIC No 618 `Sistemas Dinámicos'.

\section{Preliminaries}\label{preliminaries}

\subsection{Partial hyperbolicity and quasi-attractors}\label{prelim1}

We say that a $C^1$-diffeomorphism $f$ in a closed Riemannian manifold $M$ is \emph{(strongly) partially hyperbolic} if it preserves a continuous splitting $TM=E^s\oplus E^c \oplus E^u$, with non-trivial \emph{stable} $E^s$ and \emph{unstable} $E^u$ bundles, such that for some constants $\lambda\in (0,1)$ and $C>0$:
\begin{center}$||Df^n_{E^s(x)}||< C \lambda^n$, \hspace{0.1cm} $||Df^{-n}_{E^u(x)}||< C \lambda^n$ \hspace{0.1cm} and

$||Df_{E^s(x)}||<m(Df_{E^c(x)})\leq ||Df_{E^c(x)}||< m(Df_{E^u(x)})$
\end{center}
for every $x\in M$ and $n\geq 0$. Recall that for $T$ linear, $m(T)$ and $||T||$ denote the min and max of $\{||T(v)||\}_{||v||=1}$, respectively. For simplicity, all along this text we will drop the word `strongly' and call these systems merely \emph{partially hyperbolic}.

The strong bundles $E^s$ and $E^u$ are known to uniquely integrate to $f$-invariant foliations $\W^s$ and $\W^u$ (see \cite{HPS}). The bundles $E^s\oplus E^c$ and $E^c\oplus E^u$ may or may not be integrable. Whenever they integrate to $f$-invariant foliations (denoted by $\W^{cs}$ and $\W^{cu}$, respectively) we will say that $f$ is \emph{dynamically coherent}. If this is the case, then $\W^{cs}\cap \W^{cu}$ is an $f$-invariant foliation tangent to $E^c$ that we will denote by $\W^c$.

For every $i\in \{u,c,s,cs,cu\}$ the foliation $\W^i$ is a foliation with $C^1$ leaves tangent to a continuous bundle. We will consider the leaves of $\W^i$ with the intrinsic metric induced by the metric from $M$. For $R>0$ and $x\in M$ we denote by $\W^i_R(x)$ the ball of center $x$ and radius $R$ inside the leaf $\W^i(x)$. It is direct to check that if $x_n \xrightarrow{n} x$ then $\W^i_R(x_n) \xrightarrow{n} \W^i_R(x_n)$ in the Hausdorff topology. We will use this fact several times along the text.

We say that $A\subset M$ is a \emph{minimal unstable lamination} if it is a minimal set of the foliation $\W^u$, that is, if it is a $\W^u$-saturated compact set such that $\overline{\W^u(x)}=A$ for every $x\in A$. Minimal unstable laminations are minimal, with respect to the inclusion, among non-empty compact $\W^u$-saturated sets.

Let $\mathcal{R}(f)\subset M$ the \emph{chain recurrent set} of $f$, that is, the union of all points $x\in M$ such that there exists a non-trivial $\epsilon$-pseudo orbit from $x$ to $x$ for every $\epsilon>0$. It coincides with the complement of all points contained in a wandering region of the form $U\setminus f(\overline{U})$ for some open set $U$ such that $\overline{f(U)}\subset U$. We consider $\mathcal{R}(f)$ divided in equivalent classes, called \emph{chain recurrent classes}, given by the relation $x\sim y $ if and only if there exists a non-trivial $\epsilon$-pseudo orbit from $x$ to $y$ and another from $y$ to $x$ for every $\epsilon>0$. 

A \emph{quasi-attractor} is a chain recurrent class $A$ for which there exists a base of neighborhoods $\{U_i\}_i$ (i.e. $A\subset U_i$ and $A=\bigcap_i U_i$) such that $\overline{f(U_i)}\subset U_i$ for every $i$. Quasi-attractors always exists for homeomorphisms in compact metric spaces. A good reference for the notions of chain recurrent classes and quasi-attractors is \cite{CP}. 

Every quasi-attractor is a compact $\W^u$-saturated set so it contains at least one minimal unstable lamination. Then uniqueness (resp. finiteness) of minimal unstable laminations implies uniqueness (resp. finiteness) of quasi-attractors. 

\subsection{Discretized Anosov flows}\label{prelim2}
A general reference for this subsection is \cite[Appendix G]{BFFP}.

We say that a flow $\varphi_t:M\to M$ is an \emph{Anosov flow} if it preserves a continuous and $D\varphi_t$-invariant decomposition $TM=E^s\oplus E^c \oplus E^u$ such that vectors in $E^s$ and $E^u$ are uniformly contracted for the future and past, respectively, and the subbundle $E^c$ is generated by $\frac{\partial \varphi_t}{\partial t}|_{t=0}$.

We say that $\varphi_t:M\to M$ is a \emph{topological Anosov flow} if it is a continuous flow with  $\frac{\partial \varphi_t}{\partial t}|_{t=0}$ a continuous vector field and preserving two topologically transverse continuous foliations $\F^{ws}$ and $\F^{wu}$ such that:

\begin{itemize}
\item The foliaton $\F^{ws}\cap\F^{wu}$ is the foliation given by the orbits of $\varphi_t$.
\item Given $x$ in $M$ and $y\in \F^{ws}(x)$ (resp. $y\in \F^{wu}(x)$) there exists an increasing continuous reparametrization $h:\mathbb{R}\to \mathbb{R}$ such that $d(\varphi_t(x),\varphi_{h(t)}(y))\to 0$ as $t\to +\infty$ (resp. $t\to -\infty$).
\item There exists $\epsilon>0$ such that for every $x\in M$ and $y\in \F^{ws}_{loc}(x)$ (resp, $y\in \F^{wu}_{loc}(x)$), with $y$ not in the same orbit as $x$, and for every increasing continuous reparametrization $h:\mathbb{R}\to \mathbb{R}$ with $h(0)=0$, there exists $t\leq 0$ (resp. $t\geq 0$) such that $d(\varphi_t(x),\varphi_{h(t)}(y))>\epsilon$.
\end{itemize}

\begin{remark}\label{rmkAF} Most of the classical properties of Anosov flows are valid also in the context of topological Anosov flows (see \cite{Bar} and references therein). In particular:
\begin{enumerate} \item If $\varphi_t$ is transitive then the foliations $\F^{ws}$ and $\F^{wu}$ are minimal.
\item If $\varphi_t$ is not transitive then there exists a decomposition of the non-wandering set, $\Omega(\varphi_t)=\Lambda_1 \cup \ldots \cup \Lambda_K$, in disjoint \emph{basic pieces} $\Lambda_i$ that are compact, $\varphi_t$-invariant and such that $\varphi_t|_{\Lambda_i}:\Lambda_i\to \Lambda_i$ is transitive. Moreover, some of them, $\Lambda_1, \ldots, \Lambda_k$, are \emph{attracting  basic pieces} such that its whole basin $\F^{ws}(\Lambda_1)\cup \ldots \cup \F^{ws}(\Lambda_k)$ is an open and dense subset of $M$.
\end{enumerate} 
\end{remark}

\begin{definition}[Discretized Anosov flow]
A partially hyperbolic diffeomorphism $f:M\to M$ with $\dim(E^c)=1$ is a \emph{discretized Anosov flow} if there exist a topological Anosov flow $\varphi_t:M\to M$ and a continuous function $\tau:M\to \mathbb{R}_{>0}$ such that $$f(x)=\varphi_{\tau(x)}(x)$$ for every $x$ in $M$.
\end{definition}

\begin{remark} For every $f$ that is the time 1 map of an Anosov flow there exists a $C^1$-neighborhood $\mathcal{U}_f$ such that every $g$ in $\mathcal{U}_f$ is a discretized Anosov flow (see for example \cite{BG1}). This is a consequence of $(f,\W^c)$ being plaque expansive.
\end{remark}

\begin{remark} In \cite{BFFP} discretized Anosov flows are shown to be, modulo finite iterate, the unique type of partially hyperbolic diffeomorphism homotopic to the identity in many 3-manifold. This is the case for Seifert 3-manifolds (\cite[Theorem A]{BFFP}) and for dynamically coherent ones in hyperbolic 3-manifolds (\cite[Theorem B]{BFFP}).
\end{remark}

The following is contained in \cite[Proposition G.1]{BFFP} in the case $\dim(M)=3$ but is valid in any dimension:

\begin{prop}\label{propdafdc} Let $f(x)=\varphi_{\tau(x)}(x)$ be a discretized Anosov flow. Then $f$ is dynamically coherent, the weak-stable and weak-unstable foliations $\F^{ws}$ and $\F^{wu}$ of $\varphi_t$ are center-stable and center-unstable foliations $\W^{cs}$ and $\W^{cu}$ for $f$ tangent to $E^s \oplus E^c$ and $E^c \oplus E^u$, respectively, and the flow lines of $\varphi_t$ form a center foliation $\W^c$ for $f$ that is tangent to $E^c$.
\end{prop}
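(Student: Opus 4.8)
The plan is to realize $\F^{ws}$, $\F^{wu}$, and the orbit foliation $\mathcal{O}$ of $\varphi_t$ as, respectively, $\W^{cs}$, $\W^{cu}$, and $\W^c$ for $f$; the heart of the matter is that the flow direction is tangent to $E^c$. Let $X$ be the (continuous, nowhere vanishing) generating vector field of $\varphi_t$. Reparametrizing $\varphi_t$ to unit speed changes neither $\mathcal{O}$, $\F^{ws}$, $\F^{wu}$, nor the diffeomorphism $f$, so we may assume $\|X\|\equiv1$. Since $f(x)=\varphi_{\tau(x)}(x)$ moves each point along its $\varphi_t$-orbit, $f$ maps orbits onto orbits, hence preserves $\mathcal{O}$, and likewise preserves $\F^{ws}$ and $\F^{wu}$ (membership in $\F^{ws}(x)$ depends only on the orbits of $x$ and $y$). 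Because $\tau$ is continuous and positive on the compact $M$ we have $0<\tau_{\min}\le\tau\le\tau_{\max}$, so $\tau_n:=\sum_{k=0}^{n-1}\tau\circ f^k\to+\infty$ and $\tau_{-n}\to-\infty$: the $f$-orbit of a point sweeps out its entire $\varphi_t$-orbit, and $f$ restricted to any orbit is a homeomorphism advancing the flow-time parameter by an amount in $[\tau_{\min},\tau_{\max}]$, in particular fixed-point free. Finally $f$ is $C^1$ and carries the $C^1$ curve $\mathcal{O}(x)$ onto $\mathcal{O}(f(x))$, so $\mathbb{R}X$ is a $Df$-invariant continuous line field.

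Now I would prove $\mathbb{R}X=E^c$. By domination and $\dim E^c=1$ it is enough to show $\mathbb{R}X\subseteq E^{cu}:=E^c\oplus E^u$ and $\mathbb{R}X\subseteq E^{cs}:=E^s\oplus E^c$, for then $\mathbb{R}X\subseteq E^{cu}\cap E^{cs}=E^c$, with equality by dimension. For the first inclusion, let $\pi^{cu},\pi^s$ be the projections of $TM=E^{cu}\oplus E^s$ and set $\rho(x):=\|\pi^s_xX(x)\|/\|\pi^{cu}_xX(x)\|\in[0,\infty]$, a continuous function that is infinite exactly where $X\in E^s$. Since $Df$ preserves the splitting and $\mathbb{R}X$, the scalar by which $Df^{-n}$ acts on $\mathbb{R}X$ cancels in this ratio, and the domination $E^s\prec E^{cu}$ gives $\rho(f^{-n}x)\ge C^{-1}\lambda^{-n}\rho(x)$. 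Hence either $\rho\equiv0$, i.e.\ $\mathbb{R}X\subseteq E^{cu}$, or $\rho(x_0)>0$ for some $x_0$, in which case $\rho(f^{-n}x_0)\to\infty$, so $\rho$ (being continuous and $[0,\infty]$-valued on a compact space) must attain the value $\infty$: there is $w$ with $X(w)\in E^s(w)$. The latter alternative cannot occur: the $\varphi_t$-orbit through $w$ is then tangent to $E^s$, so by unique integrability of $E^s$ it lies inside a single leaf of $\W^s$, which is therefore $f$-invariant; but $f$ contracts the intrinsic metric of $\W^s$-leaves uniformly, whereas $f$ acts on that orbit as a fixed-point-free homeomorphism of a line or a circle, and no such homeomorphism can uniformly contract distances (in the line case it has bounded displacement, so $g(b)-g(a)\ge(b-a)-2\sup|g-\mathrm{id}|$; the circle case is excluded by the contraction principle). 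The inclusion $\mathbb{R}X\subseteq E^{cs}$ follows symmetrically, exchanging $E^s$ with $E^u$ and $f$ with $f^{-1}$ and replacing ``contracts'' by ``expands''. Thus $\mathbb{R}X=E^c$ and $\mathcal{O}$ is tangent to $E^c$.

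It remains to identify $\W^{cs}=\F^{ws}$ and $\W^{cu}=\F^{wu}$. The leaves of $\F^{ws}$ are $f$-invariant and contain the orbits, so $T\F^{ws}\supseteq T\mathcal{O}=E^c$; moreover, points that are forward asymptotic under $f$ are forward asymptotic (after reparametrization) under $\varphi_t$ — again because $\tau$ is bounded — so each $f$-stable manifold $\W^s(x)$ is contained in $\F^{ws}(x)$, giving $T\F^{ws}\supseteq E^s\oplus E^c$; symmetrically $T\F^{wu}\supseteq E^c\oplus E^u$. Since topological transversality of $\F^{ws}$ and $\F^{wu}$, whose intersection is the $1$-dimensional $\mathcal{O}$, yields $\dim\F^{ws}+\dim\F^{wu}=\dim M+1=\dim E^s+\dim E^u+2$, the two inclusions must be equalities: $T\F^{ws}=E^s\oplus E^c$ and $T\F^{wu}=E^c\oplus E^u$. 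Hence $E^s\oplus E^c$ and $E^c\oplus E^u$ integrate, to $\F^{ws}$ and $\F^{wu}$ respectively, so $f$ is dynamically coherent with $\W^{cs}=\F^{ws}$, $\W^{cu}=\F^{wu}$, and $\W^c=\W^{cs}\cap\W^{cu}=\F^{ws}\cap\F^{wu}=\mathcal{O}$, tangent to $E^c$.

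The step I expect to be the genuine obstacle is the one glossed over in the second paragraph: passing from ``$X(w)\in E^s(w)$ at the single point $w$ (hence along the discrete $f$-orbit of $w$)'' to ``the whole $\varphi_t$-orbit of $w$ is tangent to $E^s$''. The pointwise information does not transfer to the intermediate points of the orbit for free, and closing this gap requires the topological Anosov flow structure itself — the expansivity built into the definition, together with the unique integrability of the strong bundles. This is precisely what is carried out in \cite[Proposition G.1]{BFFP}; that argument, although written for $\dim M=3$, never uses that hypothesis, so the statement holds in all dimensions.
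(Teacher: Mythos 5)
The paper does not actually give a proof of this proposition: it invokes \cite[Proposition G.1]{BFFP} and remarks that the argument there, written for $\dim M=3$, works in any dimension. Your sketch reconstructs the natural strategy behind that result (first show $\mathbb{R}X=E^c$ via a domination dichotomy, then identify $\F^{ws},\F^{wu}$ with $\W^{cs},\W^{cu}$), and you correctly isolate and flag one serious obstacle. Two further points in your argument, however, also need repair.

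In the exclusion step you argue that $f$ restricted to the orbit of $w$ is a fixed-point-free homeomorphism of a line or circle with bounded displacement, and that such a map cannot uniformly contract. The displacement bound is in the flow-time parameter (equivalently, arclength along the orbit, since $\|X\|\equiv1$), whereas the contraction $f$ provides is in the \emph{intrinsic} $\W^s$-metric of the leaf. These two metrics on the orbit agree only when $\dim E^s=1$, so that the orbit is the entire stable leaf. When $\dim E^s\ge2$ the orbit is a curve inside a higher-dimensional leaf and may recur there, so $d_{\W^s}(\gamma(a),\gamma(b))$ need not be bounded below in terms of $|b-a|$, and $g(b)-g(a)\ge(b-a)-2\sup|g-\id|$ does not contradict $d_{\W^s}(f^n p, f^n q)\to0$. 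Since the paper uses the proposition precisely because it holds in all dimensions, this cannot be brushed aside. Separately, the dimension count in your last paragraph writes $T\F^{ws}\supseteq E^s\oplus E^c$, which presupposes that the leaves of $\F^{ws}$ are $C^1$. For a topological Anosov flow, $\F^{ws}$ is a priori only a $C^0$ foliation; its $C^1$ regularity and its tangency to $E^s\oplus E^c$ are part of what the proposition asserts, not data you can assume. The standard fix is to build the $C^1$ object directly: once $\mathbb{R}X=E^c$ is established, the $\W^s$-saturation $\bigcup_{t\in\mathbb{R}}\W^s(\varphi_t(x))$ of each orbit is a $C^1$ immersed manifold tangent to $E^s\oplus E^c$ (by a graph-transform argument), it is contained in $\F^{ws}(x)$, and one then shows it exhausts $\F^{ws}(x)$; regularity of $\F^{ws}$ and dynamical coherence are then conclusions rather than hypotheses.
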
 

Let us finish this subsection by showing examples of discretized Anosov flows with arbitrary number of quasi-attractors, even infinitely many (countable or uncountable).

We say that a flow $\varphi_t:X\to X$ is a \emph{suspension flow} if there exists a homeomorphism $g:Y\to Y$ such that the flow $\varphi_t$ is the projection of the flow in $Y\times \mathbb{R}$ generated by the vector field $\frac{\partial}{\partial t}=(0,1)$ into the quotient $X=Y\times \mathbb{R}/_\sim$ given by $(y,t+1)\sim (g(y),1)$. Notice that for a suspension flow the space $X$ has the structure of a bundle over the circle $S^1$ with fibers homeomorphic to $Y$. Moreover, the flow $\varphi_t$ takes fibers to fibers and the time 1 map $\varphi_1:X\to X$ leaves invariant each fiber (it projects as the identity on the base) and acts on each of them as the map $g$.

\begin{example}[Perturbing the time 1 map of an Anosov's suspension]\label{exSusp} Let us consider $\varphi_t:M\to M$ to be the suspension of an Anosov diffeomorphism $g:N\to N$.

We can perturb the time 1 map of $\varphi_t$ in order to get a partially hyperbolic map $f$ that still preserves fibers but acts like a Morse-Smale in the base. For example, taking coordinates $x=(y,t)$, we can consider $f$ explicitly as
$f(y,t)=\varphi_{\tau(y,t)}(y,t)$ with $\tau(y,t)=1+\alpha\sin(2\pi k t)$ for any $\alpha\in (0,1)$. In this case, $f$ has $k$ proper attractors and $k$ proper repellers.

Further, for a suitable $1$-periodic map $h:\mathbb{R}\to (-1,1)$, the discretization $\tau(y,t)=1+h(t)$ can produce infinite number of quasi-attractors and quasi-repellers. It is sufficient for $h$ to have infinite zeros (countably or uncountably), each of them accumulated by positive and negative values.
\end{example}

\begin{example}(Perturbing the time 1 map of the Franks-Williams's example)\label{exFW}

Consider $\varphi_t:M\to M$ the Franks-Williams's example of a non-transitive Anosov flow (\cite{FW}). Let $\Lambda$ be the unique basic attracting piece for $\varphi_t$.

In this particular example the flow $\varphi_t|_\Lambda:\Lambda \to \Lambda$ is conjugate to the suspension of a derived from Anosov map $g:\mathbb{T}^2 \to \mathbb{T}^2$ restricted to its unique attractor $Y\subset \mathbb{T}^2$. In particular, the time 1 map $\varphi_1|_\Lambda$ leaves invariant the fibers of this bundle structure acting on each of them as $g|_Y:Y\to Y$. Notice that each fiber is already a minimal unstable lamination for $\varphi_1$.

Moreover, as the construction of $\varphi_t$ involves performing a surgery far from the suspension of $g|_Y:Y\to Y$, in fact, there exists a neighborhood $V$ of $Y$ with $g(V)\subset V$ such that $\varphi_t$ is conjugate to the suspension of $g|_V:V\to V$ in a neighborhood $U$ of $\Lambda$. 

So we can perturb $\varphi_1$ in a smaller bundle neighborhood $U'\subset U$ of $\Lambda$ to obtain a discretization $f(x)=\varphi_{\tau(x)}(x)$ that acts with arbitrary number of quasi-attractors in the base and leaves unchanged the dynamics of $\varphi_1$ outside $U$. This can be done as in the previous example by taking $\tau(y,t)=1+h(t)$ for a suitable $1$-periodic diffeomorphism $h:\mathbb{R}\to \mathbb{R}$ in the neighborhood $U'$ and glueing it with the constant $\tau=1$ outside $U$.

This construction produces arbitrary number of quasi-attractors for $f$, each one of them homeomorphic to $Y$. On the fibered neighborhood $U'$ of $\Lambda$ the map $f$ acts as $h$ in the base $S^1$ and as $g$ on the fibers near $\Lambda$.
\end{example}

\subsection{Quasi-isometrically action on $\W^c$}\label{prelim3} In this subsection we present a more technical definition that will allow us to state and prove our results in its general form in Propositions \ref{prop1} and \ref{prop2}.

Let $f:M\to M$ be a dynamically coherent partially hyperbolic diffeomorphism with $\dim(E^c)=1$. 

\begin{definition} We say that $f$ acts \emph{quasi-isometrically in the future} on $\W^c$ if there exist some constants $l,L>0$ such that \begin{equation}\label{defqi} f^n(\W^c_l(x))\subset \W^c_L(f^n(x))\end{equation} for every $x$ in $M$ and $n\geq 0$. We analogously define \emph{quasi-isometrically in the past} if (\ref{defqi}) is verified for every $n\leq 0$. If $f$ acts quasi-isometrically in the past and future we say that $f$ acts \emph{quasi-isometrically} on $\W^c$.
\end{definition}

\begin{remark}\label{rem3.1} Every discretized Anosov flow $f(x)=\varphi_{\tau(x)}(x)$ acts quasi-isometrically on $\W^c$ with constants $l=\min\tau$ and $L=\max \tau$. 
\end{remark}

\begin{remark}\label{rem3.2} Every one-dimensional center partially hyperbolic skew-product acts quasi-isometrically on $\W^c$. This is because the length of center leaves is uniformly bounded in $M$ so it is enough to choose $L$ any constant larger than this bound. In fact, the same is valid if we ask $\W^c$ to be \emph{uniformly compact}, that is, if all leaves of $\W^c$ are compact and the leaf length function $x\mapsto \length(\W^c(x))$ is bounded in $M$.
\end{remark}

\begin{notation}
Given $x$ in $M$ and $y$ in $\W^c(x)$ we will denote by $[x,y]_c$ a center segment from $x$ to $y$ inside $\W^c(x)$. Notice that a priori there exist two alternatives for $[x,y]_c$ if $\W^c(x)$ is a circle. Later we will work with $\W^c$ oriented and this ambiguity will disappear.
\end{notation}

The following lemma and its corollary will be used later in the following sections. We say that a curve $x^u:[0,1]\to M$ is an \emph{unstable curve} if it is everywhere tangent to the bundle $E^u$.

\begin{lemma}\label{lemma2.1} Suppose $f$ acts quasi-isometrically in the past on $\W^c$. Let $[x,y]_c$ be a center segment in $M$ and $x^u:[0,1]\to M$ be an unstable curve such that $x^u(0)=x$. Then there exist a unique unstable curve $y^u:[0,1]\to M$ and unique center segments $[x^u(t),y^u(t)]_c$ varying continuously with $t\in [0,1]$ in the Hausdorff topology and verifying $[x^u(0),y^u(0)]_c=[x,y]_c$.
\end{lemma}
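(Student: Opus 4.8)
The plan is to build the curve $y^u$ and the center segments $[x^u(t), y^u(t)]_c$ by a continuity/connectedness argument, transporting the initial data $[x,y]_c$ along the unstable curve $x^u$. Consider the set $T \subset [0,1]$ of parameters $s$ for which there exist an unstable curve $y^u : [0,s] \to M$ and center segments $[x^u(t), y^u(t)]_c$, varying continuously in the Hausdorff topology for $t \in [0,s]$, with $[x^u(0), y^u(0)]_c = [x,y]_c$. I would show $0 \in T$ (trivial), $T$ is open in $[0,1]$, and $T$ is closed; connectedness of $[0,1]$ then gives $T = [0,1]$. Uniqueness should follow from a separate local argument showing the continuation is locally unique, hence globally unique.

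For the local existence step (which gives openness of $T$), fix a parameter $s$ and a center segment $[x^u(s), y^u(s)]_c$. Using local product structure of the foliations $\W^{cu}$ and $\W^u$ (the pair $\W^u$, $\W^c$ subfoliate $\W^{cu}$ and are topologically transverse inside it), for $t$ near $s$ the point $x^u(t)$ lies in a small $\W^{cu}$-neighborhood of $x^u(s)$; inside this neighborhood I can slide along the local center foliation to define $y^u(t)$ as the unique point of $\W^u_{loc}(y^u(s))$ on the same local center leaf as... more precisely, I define $y^u(t)$ so that $y^u(t) \in \W^c_{loc}(z(t))$ where $z(t) = \W^u_{loc}(y^u(s)) \cap \W^{cu}(x^u(t))$ matched appropriately, and then $[x^u(t), y^u(t)]_c$ is the center segment close to $[x^u(s), y^u(s)]_c$. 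This produces the desired continuous family on a neighborhood of $s$ and shows $y^u$ is again an unstable curve since $\W^u$ integrates $E^u$.

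The main obstacle is the closedness step, because a priori the lengths of the center segments $[x^u(t), y^u(t)]_c$ could blow up as $t \to s^-$, so that the limiting segment fails to exist (the leaf $\W^c(x^u(s))$ might be a line, and the $y^u(t)$ could run off to infinity along it). This is exactly where the hypothesis that $f$ acts quasi-isometrically in the past is used. The key estimate is: the center-segment length $\length([x^u(t), y^u(t)]_c)$ stays uniformly bounded for all $t$. To see this, note that applying $f^{-n}$ contracts the unstable curve $x^u|_{[0,t]}$ exponentially, so for $n$ large the whole curve $f^{-n}(x^u([0,t]))$ has length $< l$, in particular $f^{-n}(x^u(t)) \in \W^c_l(f^{-n}(x^u(0))) = \W^c_l(f^{-n}(x))$ is false — rather, one shows $f^{-n}(y^u(t)) \in \W^{cu}(f^{-n}(x))$ stays within bounded $\W^{cu}$-distance, and $f^{-n}([x^u(t),y^u(t)]_c) \subset \W^c_l(f^{-n}(x^u(t)))$ once $n$ is large enough (using that the initial segment $[x,y]_c$ has finite length and unstable holonomy on a short curve distorts center length boundedly, so after enough backward iterates everything has center-length $< l$). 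Then quasi-isometry in the past gives $f^{m}(\W^c_l(\cdot)) \subset \W^c_L(\cdot)$ for all $m \geq 0$, so pushing forward by $f^n$ yields $\length([x^u(t), y^u(t)]_c) \leq L$ uniformly in $t$. With this uniform bound, a subsequence of the segments $[x^u(t_k), y^u(t_k)]_c$ converges in the Hausdorff topology as $t_k \to s$ to a center segment of length $\leq L$ with one endpoint $x^u(s)$; its other endpoint defines $y^u(s)$, and continuity at $s$ follows from uniqueness of the limit (any two subsequential limits are center segments from $x^u(s)$ of bounded length agreeing with the nearby data, hence equal). This closes the argument; uniqueness of the global continuation then follows since two solutions agree on the closed-and-open set where they coincide.
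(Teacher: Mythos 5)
Your open-closed framework is a legitimate scaffold, and your local-existence (openness) step is essentially correct. The gap is in the closedness step, and it sits exactly where the quasi-isometry hypothesis is invoked.

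First, the claim that $f^{-n}([x^u(t),y^u(t)]_c)\subset\W^c_l(f^{-n}(x^u(t)))$ for $n$ large is not a consequence of past quasi-isometry and is in general false. The hypothesis only bounds the length of $f^{-n}(I)$ by $2L$ whenever $\length(I)\le 2l$; it does not force those lengths below $l$. Take $f=\varphi_1$ the time-one map of an Anosov flow: $f^{-n}$ preserves the flow-time length of every center segment, so a segment of flow-time length $T>l$ never gets shorter than $l$ under backward iteration. Second, the step ``quasi-isometry in the past gives $f^{m}(\W^c_l(\cdot))\subset\W^c_L(\cdot)$ for all $m\ge 0$'' reverses the hypothesis: past quasi-isometry controls $f^{-m}$ for $m\ge 0$, not $f^m$. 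The estimate you would need to push forward by $f^n$ without blowing up center length is quasi-isometry in the \emph{future}, which the lemma does not assume. So the uniform bound $\length([x^u(t),y^u(t)]_c)\le L$ does not follow, and without some uniform bound your closedness argument (taking a Hausdorff limit of center segments) collapses.

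The paper avoids this entirely by choosing a single $n_0$, depending only on the unstable length of the whole curve $x^u$, so that $f^{-n_0}(x^u)$ is shorter than the local-product-structure scale $\delta$. Past quasi-isometry gives a bound $R$ on the length of $f^{-n}([x,y]_c)$ for \emph{all} $n\ge 0$ (this is the only place it is used), so the transport of $f^{-n_0}([x,y]_c)$ along all of $f^{-n_0}(x^u)$ is well defined in one pass; pushing forward by the \emph{fixed} diffeomorphism $f^{n_0}$ yields the transport on $[0,1]$ directly, with lengths bounded by the (finite) distortion of $f^{n_0}$ rather than by any quasi-isometry estimate. No open-closed argument is needed. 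If you want to repair your version, fix $n$ once at the outset as above; at that point your open-closed scaffolding becomes superfluous and the proof reduces to the paper's one-shot construction.
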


In the setting of the previous lemma, we will say that $\{[x^u(t),y^u(t)]_c\}_{t\in [0,1]}$ is the \emph{transport by unstable holonomy} of the center segment $[x,y]_c$ along the unstable curve $x^u$. 

\begin{proof}
As $f$ acts quasi-isometrically in the past on $\W^c$ there exists $R>0$ such that the length of $f^{-n}([x,y]_c)$ is less than $R$ for every $n\geq 0$.

As $f$ is dynamically coherent, leaves of $\W^{cu}$ are subfoliated by leaves of $\W^c$ and $\W^u$ having local product structure. Then, as $\W^c$ is tangent to a continuous bundle, there exist small constants $\epsilon,\delta>0$ such that for every center segment $[x',y']_c$ of length less than $R$ and every point $x''\in \W^u_\delta(x')$ there exists a unique center segment $[x'',y'']_c$ with $y''\in \W^u_\epsilon(y')$ such that $[x'',y'']_c$ is contained in an unstable $\epsilon$-neighborhood of $[x',y']_c$ of the form $\bigcup_{z\in [x',y']_c}\W^u_\epsilon(z)$. In other words, the transport by unstable holonomy for center segments of length at most $R$ is well defined along any unstable curve of a certain small length $\delta>0$.

Since $f$ contracts unstable distances in the past, then $f^{-n_0}(x^u)$ will have length less that $\delta$ for some $n_0\geq 0$. Then the transport by unstable holonomy of $[f^{-n_0}(x),f^{-n_0}(y)]_c$ along $f^{-n_0}(x^u)$ is well defined and, iterating it $n_0$ to the future, the transport by unstable holonomy of $[x,y]_c$ along $x^u$ is also well defined.
\end{proof}

A priori for a dynamically coherent partially hyperbolic diffeomorphism, for a given a center-unstable leaf $\W^{cu}(x)$ it might be the case that the set $\W^u(\W^c(x)):=\bigcup_{y\in \W^c(x)}\W^u(y)$ is contained but does not coincide with $\W^{cu}(x)$. Analogously it might be the case that $\W^c(\W^u(x)):=\bigcup_{y\in \W^u(x)}\W^c(y)$ does not coincide with $\W^{cu}(x)$. This does not happen if $f$ acts quasi-isometrically in the past on $\W^c$:

\begin{cor}\label{cor2.3}
If $f$ acts quasi-isometrically in the past on $\W^c$ then  \newline $\W^u(\W^c(x))=\W^c(\W^u(x))=\W^{cu}(x)$ for every $x$ in $M$.
\end{cor}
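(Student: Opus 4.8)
The plan is to deduce the three-way equality from Lemma \ref{lemma2.1} together with the fact that $f$ is dynamically coherent. Fix $x$ in $M$. The inclusions $\W^u(\W^c(x))\subset \W^{cu}(x)$ and $\W^c(\W^u(x))\subset \W^{cu}(x)$ are immediate: $\W^c$ and $\W^u$ subfoliate $\W^{cu}$ (this is exactly what dynamical coherence gives, since $\W^{cu}$ is tangent to $E^c\oplus E^u$ and both $\W^c$ and $\W^u$ are invariant foliations tangent to subbundles), so any leaf of $\W^c$ or of $\W^u$ through a point of $\W^{cu}(x)$ stays inside $\W^{cu}(x)$, and hence so does any union of such leaves obtained by first moving along one foliation and then the other. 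So the content is the reverse inclusions, i.e.\ that \emph{every} point $z\in\W^{cu}(x)$ can be reached from $x$ first along $\W^c$ then along $\W^u$, and also first along $\W^u$ then along $\W^c$.

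First I would prove $\W^{cu}(x)=\W^c(\W^u(x))$, which is the easier half. Let $z\in\W^{cu}(x)$. Since $\W^{cu}(x)$ is a connected $C^1$ surface (more precisely a manifold) subfoliated with local product structure by $\W^c$ and $\W^u$, I can join $x$ to $z$ by a path inside $\W^{cu}(x)$ and cover it by finitely many product charts, so that $z$ is obtained from $x$ by a finite concatenation of moves, each of which is either a move inside a $\W^u$-leaf or inside a $\W^c$-leaf. It then suffices to show that one can always interchange the order of a $\W^u$-move followed by a $\W^c$-move, reducing any such concatenation to a single $\W^u$-move followed by a single $\W^c$-move. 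But given a point $p$, a point $q\in\W^u(p)$, and a center segment $[q,r]_c$, Lemma \ref{lemma2.1} applied with $[x,y]_c:=[q,r]_c$ (reversed) and the unstable curve from $q$ to $p$ produces a point $p'\in\W^c(p)$ and a point reachable from $p'$ along $\W^u$ equal to $r$; this is precisely the statement that $r\in\W^u(\W^c(p))$, i.e.\ that a $\W^u$-then-$\W^c$ move from $p$ can be rewritten as a $\W^c$-then-$\W^u$ move. Iterating this swap finitely many times collapses the concatenation and shows $z\in\W^c(\W^u(x))$ and $z\in\W^u(\W^c(x))$ simultaneously.

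Concretely: Lemma \ref{lemma2.1} says that for \emph{any} center segment $[x,y]_c$ and \emph{any} unstable curve $x^u$ starting at $x$, the transport by unstable holonomy is globally defined, i.e.\ the endpoint $x^u(1)$ lies on a center leaf through which one recovers, via unstable holonomy, the whole transported family; in particular $y\in\W^c(x)$ implies $y^u(1)\in\W^c(x^u(1))$ and $y^u(1)\in\W^u(y)$. Reading this backwards gives exactly the order-swapping statement needed above, uniformly over all points of $M$. So the main work is the bookkeeping: cover a path in $\W^{cu}(x)$ by product charts and use Lemma \ref{lemma2.1} to normalize the sequence of moves. I expect the only genuine subtlety to be making sure the swap can be performed globally rather than just locally — which is precisely what Lemma \ref{lemma2.1} buys us, since without quasi-isometry one only has the local holonomy transport of its proof's first paragraph, valid for unstable curves of length at most $\delta$, and the finitely-many-charts argument would not close up. With Lemma \ref{lemma2.1} in hand this obstruction disappears and the corollary follows.
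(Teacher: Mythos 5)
Your argument is essentially the paper's: join $x$ to any $z\in\W^{cu}(x)$ by a finite concatenation of center and unstable arcs, then repeatedly use the transport by unstable holonomy from Lemma~\ref{lemma2.1} to rewrite $uc$-pairs as $cu$-pairs until the concatenation collapses to two arcs. One small slip in your write-up: the swap you describe ($u$-then-$c$ $\rightsquigarrow$ $c$-then-$u$) pushes center moves to the front, so it produces a $c$-then-$u$ path and hence $z\in\W^u(\W^c(x))$, not ``a single $\W^u$-move followed by a single $\W^c$-move'' as you state; the other inclusion $z\in\W^c(\W^u(x))$ requires the mirrored swap (or, equivalently, applying your swap to the reversed path), so it is not quite ``simultaneous'' --- the paper handles this with a one-line ``constructed analogously.''
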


\begin{proof}
Let $\W^{cu}(x)$ be a center-unstable leaf in $M$ and $x'$ be any point in $\W^{cu}(x)$. We can join $x$ and $x'$ by a concatenation of finite $c$ and $u$ arcs. More precisely, there exist arcs $[x,x_1]_c$, $[x_1,x_2]_u$, \ldots, $[x_{n-2},x_{n-1}]_u$, $[x_{n-1},x']_c$ that join $x$ to $x'$. 

We want to see that $x$ and $x'$ can be joined by a $cu$-path, that is, a concatenation of one center segment and one unstable segment. Indeed, by a transport by unstable holonomy we can replace any consecutive $uc$-path $[x_{i-2},x_{i-1}]_u$, $[x_{i-1},x_i]_c$ by a $cu$-path $[x_{i-2},x'_{i-1}]_c$, $[x'_{i-1},x_i]_u$ joining the same starting point $x_{i-2}$ with the same ending point $x_i$. By applying this process successively we obtain a $cu$-path from $x$ to $x'$. A $uc$-path from $x$ to $x'$ can be constructed analogously. The conclusion of the Lemma follows.
\end{proof}

\begin{remark} Lemma \ref{lemma2.1} and Corollary \ref{cor2.3} admit obvious analogous statements for center-stable leaves if $f$ acts quasi-isometrically in the future on $\W^c$.
\end{remark}

\section{Proof of Theorem \ref{thmA} and Theorem \ref{propB}}\label{sectionproofs}

Theorem \ref{thmA} and Theorem \ref{propB} will be a consequence of the following two more general statements. Together they can be seen as an obstruction to the existence of more that one minimal unstable lamination (or more than one attracting region) for certain partially hyperbolic systems with one-dimensional center.

\begin{prop}\label{prop1} 
Let $f:M\to M$ be a dynamically coherent partially hyperbolic diffeomorphism with $\dim(E^c)=1$. Suppose that $\W^{cs}$ is minimal and $f$ acts quasi-isometrically in the future on $\W^c$. Then there exists $L>0$ such that every minimal unstable lamination $A$ verifies that $$\W^c_L(x)\cap A\neq \emptyset$$ for every $x\in M$. In particular, $A$ intersects every leaf of $\W^c$.
\end{prop}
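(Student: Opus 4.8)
The plan is to exploit the minimality of $\W^{cs}$ together with the quasi-isometric behavior in the future, transporting $A$ along center-stable dynamics to fill up every center leaf up to a uniform length. Let $A$ be a minimal unstable lamination, so $A$ is $\W^u$-saturated, compact and nonempty. Fix any $x\in M$. Since $\W^{cs}$ is minimal, the leaf $\W^{cs}(x)$ accumulates on $A$; more usefully, taking any $a\in A$, the leaf $\W^{cs}(a)$ is dense in $M$, hence there is a point $y\in \W^{cs}(a)$ with $y$ arbitrarily close to $x$. The first step is therefore to understand how $A$ sits inside a center-stable leaf: because $A$ is $\W^u$-saturated and $\W^{cu}=\W^u(\W^c(\cdot))$ is not directly available (we only have quasi-isometry in the future, giving us Corollary \ref{cor2.3} for center-\emph{stable} leaves), I would work on $\W^{cs}$-leaves and use that, within $\W^{cs}(a)$, the set $A\cap \W^{cs}(a)$ is saturated by the one-dimensional unstable foliation restricted to that leaf — wait, $E^u$ is not tangent to $E^s\oplus E^c$, so instead $A\cap\W^{cs}(a)$ is a union of center leaves? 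No: $A$ is $\W^u$-saturated, and $\W^u$ is transverse to $\W^{cs}$, so $A\cap\W^{cs}(a)$ need not be saturated by anything inside $\W^{cs}(a)$.

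So the correct route is: use density of $\W^u(a)$ in $A$ and density of $\W^{ws}=\W^{cs}$-leaves. Concretely, pick $a\in A$. By minimality of $\W^{cs}$, the leaf $\W^{cs}(a)$ is dense in $M$, so there exists $y_n\in \W^{cs}(a)$ with $y_n\to x$. Now $y_n\in\W^{cs}(a)$ means $y_n$ lies in the weak-stable leaf through $a$ of the underlying (topological Anosov) structure; in particular $d(\varphi_t$-type iterates$)$ contract, i.e. $f^{k}(y_n)$ stays in the same center-stable leaf and gets uniformly close to the forward $f$-orbit of $a$ (here I use that $\W^{cs}$ corresponds to $\F^{ws}$ and stable-holonomy contraction). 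Since $a\in A$ and $A$ is $f$-invariant and compact, $f^k(a)\in A$; combined with $f^k(y_n)$ converging (along the leaf, up to reparametrization) toward the orbit of $a$, I want to land a point of the form "forward iterate of $y_n$" at uniformly bounded center distance from $A$. The key quantitative input is the quasi-isometric-in-the-future hypothesis: iterating a short center segment forward keeps its length $\le L$, and conversely the stable holonomy inside $\W^{cs}$ moving $y_n$ toward $a$ is, after enough iterates, realized within a center segment of length at most some uniform $L'$ — this is exactly the local product structure of $\W^{cs}=\W^c\cdot\W^s$ (the analogue of Corollary \ref{cor2.3} noted in the final Remark) applied near the compact forward orbit closure of $a$. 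Pulling this back by $f^{-k}$ and using quasi-isometry in the future in the form $f^{-k}(\W^c_{L'}(f^k(x)))\subset \W^c_{L}(x)$ (equivalently $f^k(\W^c_L(x))\subset\W^c_{L'}(f^k(x))$ with $l=L$), I deduce $\W^c_L(y_n)\cap A\neq\emptyset$, and then let $n\to\infty$: since $y_n\to x$, $\W^c_L(y_n)\to\W^c_L(x)$ in Hausdorff topology (the fact quoted in Section \ref{prelim1}) and $A$ is closed, so $\W^c_L(x)\cap A\neq\emptyset$.

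The steps, in order: (1) fix $a\in A$ and use minimality of $\W^{cs}$ to approximate an arbitrary $x$ by points $y_n\in\W^{cs}(a)$; (2) use that $\W^{cs}$ is the weak-stable foliation of the topological Anosov structure to get stable-holonomy contraction: for $n$ fixed, $f^{k}(y_n)$ approaches the forward $f$-orbit of $a$ along its center-stable leaf as $k\to\infty$; (3) use local product structure $\W^{cs}=\W^c\cdot\W^s$ (Corollary \ref{cor2.3} analogue) on a uniform neighborhood of the compact set $\overline{\{f^k(a)\}}$ to write, for $k$ large, the $\W^{cs}$-displacement from $f^k(y_n)$ to (a point near) $f^k(a)\in A$ as first an $\W^s$-jump of tiny size then a center segment of uniformly bounded length $L'$; using that $A$ is $\W^u$-saturated and that stable holonomy preserves $A$... here one must be careful — actually $A$ need not be $\W^s$-saturated, so instead observe $f^k(a)\in A$ and the center segment from the $\W^s$-projection of $f^k(y_n)$ has an endpoint within distance $o(1)$ of $A$, which is enough after a limit; (4) pull back by $f^{-k}$ using quasi-isometry in the future to get $\W^c_L(y_n)$ meeting an $o(1)$-neighborhood of $A$; (5) let first $k\to\infty$ then $n\to\infty$ and invoke Hausdorff continuity of $x\mapsto\W^c_L(x)$ and closedness of $A$. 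The last sentence of the Proposition is immediate: if some leaf $\W^c(x)$ missed $A$ entirely it would in particular miss $\W^c_L(x)\cap A$.

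The main obstacle I anticipate is step (3)–(4): making precise, with a \emph{uniform} constant $L$ independent of $x$, the passage from "stable holonomy brings $f^k(y_n)$ near the orbit of $a$" to "a center segment of bounded length from near $f^k(y_n)$ reaches near $A$", and then checking that the constant survives the pullback by $f^{-k}$. The quasi-isometric-in-the-future hypothesis is exactly what is designed to control this pullback, but one has to set up the local product structure constants on a compact region and argue they apply along the whole forward orbit — this is where the compactness of $M$ and the continuity of the bundles (hence uniform local product structure for $\W^{cs}$) are essential, and where I'd spend most of the care.
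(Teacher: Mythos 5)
Your plan has the right ingredients (minimality of $\W^{cs}$, forward stable contraction, quasi-isometry in the future), but the iteration is run in the wrong direction, and the crucial pullback step is not valid. In step (4) you pass from information at $f^k(y_n)$ back to $y_n$ using the claimed inclusion $f^{-k}\bigl(\W^c_{L'}(f^k(x))\bigr)\subset \W^c_{L}(x)$, asserting it is ``equivalent'' to $f^k(\W^c_L(x))\subset\W^c_{L'}(f^k(x))$. These are \emph{not} equivalent. Quasi-isometry in the future gives $f^k(\W^c_l(z))\subset\W^c_L(f^k(z))$ for $k\ge 0$, which after substituting $z=f^{-k}(x)$ yields $\W^c_l(f^{-k}(x))\subset f^{-k}\bigl(\W^c_L(x)\bigr)$ — the \emph{reverse} of what you need. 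What you write down is precisely ``$f$ acts quasi-isometrically in the past,'' which is not a hypothesis of the proposition. Two further problems compound this: (i) even granting control on center lengths under $f^{-k}$, the stable displacement between $f^k(y_n)$ and $A$ that you contracted by iterating forward would be re-expanded under $f^{-k}$, destroying the $o(1)$ estimate; (ii) the constant $L'$ in step (3) is not uniform, since the initial center component of the $\W^{cs}$-displacement from $y_n$ to $a$ can grow without bound as $y_n$ ranges over the dense leaf $\W^{cs}(a)$, and QI in the future only caps forward images of segments of \emph{a priori bounded} length.

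The paper resolves all of this by arranging to \emph{land} at $x$ after the forward iteration rather than having to come back to it. First it proves a uniform transversality lemma via a compactness argument: there is $R>0$ such that $\W^s_R(\W^c_R(x))\cap\W^u(y)\neq\emptyset$ for all $x,y\in M$ (if not, sequences $x_n,y_n$ with $R_n\to\infty$ would produce, in the limit, a $\W^{cs}$-leaf disjoint from an unstable leaf, contradicting minimality of $\W^{cs}$). Then, for the given $x$ and any $y$, it applies this at $(f^{-n}(x),\,f^{-n}(y))$ and pushes \emph{forward} by $f^n$: the stable radius $R$ contracts to some $r_n\to 0$, the center radius $R$ stays bounded by $L$ via QI in the future, and $\W^u(f^{-n}(y))$ maps to $\W^u(y)$. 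Hence $\W^s_{r_n}(\W^c_L(x))\cap\W^u(y)\neq\emptyset$ for all $n$, and letting $n\to\infty$ gives $\W^c_L(x)\cap\overline{\W^u(y)}\neq\emptyset$. In particular, taking $y\in A$ for a minimal unstable lamination $A$ gives the conclusion. This ``start at the backward iterate, iterate forward'' scheme is the fix you would need, and the uniform $R$ from the compactness lemma supplies exactly the a priori bound that your step (3) lacks.
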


\begin{remark} In Proposition \ref{prop1} the hypothesis `$\W^{cs}$ minimal' can be replaced by `$f$ chain-transitive' or `$f$ volume preserving' since the latters imply the former (see for example \cite[Lemma 1.1]{BW}).
\end{remark}

We say that a one-dimensional center manifold $\W^c$ admits a \emph{global section} if there exists a codimension one closed submanifold $N\subset M$ transverse to the leaves of $\W^c$ such that $\W^c_L(x)\cap N\neq \emptyset$ for every $x\in M$ and some constant $L>0$.

\begin{prop}\label{prop2}
Let $f:M\to M$ be a dynamically coherent partially hyperbolic diffeomorphism with $\dim(E^c)=1$. Suppose that the foliation $\W^c$ is orientable and that there exists $L>0$ such that $\W^c_L(x)\cap A\neq \emptyset$ for every minimal unstable lamination $A$. 

If $M$ has more than one minimal unstable lamination then $\W^c$ admits a global section.
\end{prop}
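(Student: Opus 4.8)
Suppose $f$ has two distinct minimal unstable laminations $A_1 \neq A_2$. Since both are $\W^u$-saturated compact sets, and each is minimal with respect to inclusion, they must be disjoint: if they intersected, the intersection would be a nonempty compact $\W^u$-saturated proper subset of each, contradicting minimality. The plan is to use these two laminations to manufacture the global section $N$. The orientability of $\W^c$ gives us a consistent notion of ``positive'' and ``negative'' direction along each center leaf, so for $x$ with $\W^c(x) \cap A_i \neq \emptyset$ we can try to select a canonical point of $A_i$ on $\W^c(x)$ — for instance the \emph{first} point of $A_1$ encountered going in the positive direction from the first point of $A_2$, or something of this flavour. The hypothesis $\W^c_L(x)\cap A_i\neq\emptyset$ ensures such points are always found within bounded distance, which is exactly what a global section requires.

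**Constructing the section.** More concretely, I would proceed as follows. Fix the orientation of $\W^c$. For each $x\in M$, the set $\W^c(x)\cap A_2$ is nonempty and closed in $\W^c(x)$; using the orientation and the bound $L$, I want to pick out a well-defined point $p(x)\in \W^c(x)\cap A_2$ — but there is an ambiguity problem because $A_2$ may meet $\W^c(x)$ in a complicated closed set, and worse, if $\W^c(x)$ is a circle there is no ``first'' point. The key observation should be that because $A_1$ and $A_2$ are disjoint, on any center leaf meeting both, the complement of $A_2$ in that leaf contains points of $A_1$, and we can look at the connected component (an open center arc, or ``gap'' of $A_2$) containing a given point of $A_1$. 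I would define $N$ to be something like the set of midpoints, in the oriented center-arc sense, of the minimal-length center segment joining a point of $A_1$ to a point of $A_2$; or alternatively use that one can canonically associate to $A_1$ the ``boundary'' points where center gaps of $A_1$ meet, and play $A_1$ against $A_2$ to kill the circle ambiguity. Whichever concrete choice, the point is: (i) the chosen locus is nonempty on every center leaf, (ii) it lies within $\W^c_{2L}(x)$ of any $x$, (iii) it is closed, and (iv) it is a codimension-one topological submanifold transverse to $\W^c$.

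**Why transversality and the manifold structure hold.** For transversality and local product structure: both $A_1$ and $A_2$ are $\W^u$-saturated, hence locally they look like (closed subset of a center transversal) $\times\,(\text{full }\W^u\text{-plaque})$ inside a $\W^{cu}$-chart, and similarly one uses $\W^s$; combining, near any point the laminations have a local product structure with the center direction, so ``the point of $A_i$ on a nearby center leaf'' varies continuously and the constructed section $N$ is a continuous codimension-one lamination transverse to $\W^c$ — and since it meets each center leaf in a bounded, suitably discrete fashion, it is in fact a closed submanifold. Continuity of the selection is where Corollary \ref{cor2.3} and the continuity of $\W^i_R(\cdot)$ in the Hausdorff topology (noted in the preliminaries) get used: moving $x$ slightly moves $\W^c_L(x)$ slightly, hence moves $\W^c_L(x)\cap A_i$ slightly, and the orientation lets us track the distinguished intersection point continuously.

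**Main obstacle.** The hard part will be pinning down the canonical selection rule so that it is genuinely well-defined and continuous on \emph{all} of $M$ — in particular handling center leaves that are circles (where ``first point in the positive direction'' is meaningless without a basepoint) and center leaves on which $A_1$ or $A_2$ meets the leaf in a Cantor-like set with no isolated structure. The disjointness $A_1\cap A_2=\emptyset$ is the crucial lever here: it should let me always find, on a compact center leaf, a distinguished \emph{gap} of $A_1\cup A_2$ or a distinguished pair of ``facing'' boundary points, removing the basepoint ambiguity; and the quasi-isometric action bounds everything by $2L$. Verifying that the resulting $N$ is a closed manifold (not merely a closed set) — i.e. that it has no accumulation pathologies along center leaves — will require a short argument using that distinct sheets of $N$ on a given center leaf stay uniformly separated, which again follows from $A_1, A_2$ being fixed disjoint compact sets and the minimal-segment construction. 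Once $N$ is produced with properties (i)–(iv), it is by definition a global section for $\W^c$, completing the proof.
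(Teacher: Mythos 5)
Your proposal and the paper's proof diverge in a fundamental way, and the divergence is exactly where your argument has a genuine gap.

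The paper does \emph{not} attempt to select a canonical point of (or between) the two laminations on each center leaf and assemble those points into a section directly. Instead it partitions $M$ into the four sets $[A]_c$, $(A,A')_c$, $[A']_c$, $(A',A)_c$ of center segments (segments inside $A$, from $A$ to $A'$, inside $A'$, and from $A'$ back to $A$), proves via a delicate transport-by-unstable-holonomy argument (Lemma 5.5) that the middle two are open and the outer two are closed, and uses this decomposition to build a \emph{continuous circle-valued function} $\rho\colon M\to S^1$. The hypothesis $\W^c_L(x)\cap A\neq\emptyset$ then forces $\rho$ to wind around $S^1$ more than once along every center segment of length $4L$. Finally, Schwartzman's averaging trick converts the merely continuous $\rho$ into a \emph{smooth} submersion $\lambda\colon M\to S^1$ that is still strictly increasing along center leaves, and one takes the section to be $N=\lambda^{-1}(0)$.

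The gap in your proposal is precisely the step you flag as the ``main obstacle,'' and it is not a small one. Your candidate constructions (``midpoint of a minimal-length segment from $A_1$ to $A_2$,'' ``distinguished facing boundary points'') would need to produce a set $N$ that is a closed codimension-one \emph{submanifold}. But $A_1$ and $A_2$ are merely minimal sets of the foliation $\W^u$; transversally to $\W^{cu}$ they may well be Cantor-like, the minimum length of a connecting segment is not attained at a single pair of points in general, and there is no reason a pointwise selection rule would cut out anything with a manifold structure, nor that it would be transverse to $\W^c$ as opposed to merely meeting every leaf. Your appeal to ``local product structure'' of the laminations gives local product structure \emph{for the laminations themselves}, not for the derived selection set, and does not bridge this gap. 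The whole point of routing the argument through a continuous $\rho$ with a uniform winding bound and then invoking Schwartzman's averaging is that it sidesteps \emph{any} pointwise selection: one never needs $N$ to touch $A$ or $A'$ at all, and the smoothness and transversality of $N$ come for free from $\lambda$ being a smooth submersion with $\partial_t(\tilde\lambda\circ\tilde\psi_t)>1/(4L)>0$. Without some device of this kind (or a genuinely new argument that the selection set is a manifold), the proof as sketched does not close. You also do not derive, as the paper does in Lemma 5.3, that $f$ acts quasi-isometrically on $\W^c$ from the standing hypothesis; this is used in the paper to justify the long transports by unstable holonomy underlying the openness/closedness argument, and it is not among the stated hypotheses of the proposition.
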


\begin{remark} Notice that Proposition \ref{prop2} does not include the hypothesis `$f$ acts quasi-isometrically on $\W^c$'. In fact, it is derived as a consequence along the proof (see Lemma 5.3.).
\end{remark}

\begin{proof}[Proof of Theorem \ref{thmA} assuming Propositions \ref{prop1} and \ref{prop2}]

Let $f(x)=\varphi_{\tau(x)}(x)$ be a discretized Anosov flow such that $\varphi_t:M\to M$ is transitive and not orbit equivalent to a suspension.

As pointed out in Proposition \ref{propdafdc} and Remark \ref{rmkAF}, the partially hyperbolic diffeomorphism $f$ is dynamically coherent, the foliation $\W^c$ coincides with the flow lines of $\varphi_t$ (in particular, it is oriented) and the foliation $\W^{cs}$ is minimal. Moreover, $f$ acts quasi-isometrically on $\W^c$ as pointed out in Remark \ref{rem3.1}.

Now, combining Propositions \ref{prop1} and \ref{prop2}, we obtain that $f$ can not admit more than one minimal unstable lamination, otherwise $\varphi_t$ would have a global section and then it would be orbit equivalent to a suspension flow. 
\end{proof}

Recall that a one-dimensional center manifold $\W^c$ is \emph{uniformly compact} if all leaves of $\W^c$ are compact and the leaf length function $x\mapsto \length(\W^c(x))$ is bounded in $M$. When $f$ is a skew-product, clearly $\W^c$ is uniformly compact (in fact, the leaf length function is continuous). We will prove Theorem \ref{propB} in its more general version for the case when $\W^c$ is an $f$-invariant uniformly compact foliation.

\begin{proof}[Proof of Theorem \ref{propB} assuming Propositions \ref{prop1} and \ref{prop2}]

Let $f:M\to M$ be a partially hyperbolic diffeomorphism with $\dim(E^c)=1$ admitting an $f$-invariant uniformly compact center foliation $\W^c$ such that the induced dynamics in the space of center leaves, $F:M/\W^c\to M/\W^c$, is transitive. Suppose that $M$ admits more that one minimal unstable laminations. We are going to see that under this hypothesis $(M,\W^c)$ has to be a virtually trivial bundle.

From \cite[Theorem 1]{BoBo} $f$ is dynamically coherent with center-stable and center-unstable foliations $\W^{cs}$ and $\W^{cu}$, respectively, such that $\W^c=\W^{cs}\cap \W^{cu}$. As $F:M/\W^c\to M/\W^c$ is transitive the foliation $\W^{cs}$ has to be minimal, otherwise a proper minimal set for $\W^{cs}$ would project to $M/\W^c$ into a proper repeller for $F$. 

Furthermore, as the length of center leaves is bounded, then $f$ automatically acts quasi-isometrically on $\W^c$ as pointed out in Remark \ref{rem3.2}.

Suppose first that $\W^c$ is orientable. Now by combining Propositions \ref{prop1} and \ref{prop2} we obtain that $\W^c$ admits a global section.

Let us denote the global section of $\W^c$ as $N\subset M$. Let $\alpha:N\to N$ denote the first return map of $\W^c$ to $N$, modulo fixing an orientation for $\W^c$. 

For every $x\in N$ let $k(x)\in \mathbb{Z}^+$ be the smallest positive integer such that $\alpha^{k(x)}(x)=x$. As $x\mapsto \length(\W^c(x))$ is bounded in $M$ there exists some constant $k \in \mathbb{Z}^+$ such that $k(x)\leq k$ for every $x$. By taking $K=k!$ we obtain that $\alpha^K=\id$. 

Let us consider a metric in $M$ such that every center segment $[x,\alpha(x)]_c$ is of length $\frac{1}{K}$ and let $\phi^c:M\to M$ denote the flow by arc-length whose flow lines are the leaves of $\W^c$. Then the map $p:N\times S^1 \to M$ given by $(x,\theta)\mapsto \phi^c_\theta(x)$ is a $K:1$ covering map sending circles of the form $\{\cdot\}\times S^1$ to leaves of the foliation $\W^c$. We conclude that $(M,\W^c)$ is a virtually trivial bundle.

In the case $\W^c$ is not orientable we can argue as above after taking an orientable double cover for $\W^c$. Indeed, we can lift $f$, $\W^c$ and all the minimal unstable laminations to an orientable double cover $\tilde{M}$. The quasi-isometrically action of $f$ on $\W^c$ remains valid on the lifted dynamics. We claim that the minimality of $\W^{cs}$ also remains valid on the lift. Indeed, if we suppose that the lift of $\W^{cs}$ is not minimal then there exist $\tilde{x}$ and $\tilde{x}'$ lifts of a point $x\in M$ such that $\overline{\W^{cs}(\tilde{x})}$ and $\overline{\W^{cs}(\tilde{x}')}$ are minimal proper subsets of $\tilde{M}$. Then $\tilde{M}$ coincides with the disjoint union $\overline{\W^{cs}(\tilde{x})}\cup\overline{\W^{cs}(\tilde{x}')}$ and we get to a contradiction. This proves the claim.

We obtain that the lifted dynamics verifies Propositions \ref{prop1} and \ref{prop2}. Then, as argued above, $\tilde{M}$ and the lift of $\W^c$ form a virtually trivial bundle. We conclude that $(M,\W^c)$ is also a virtually trivial bundle. 
\end{proof}

\section{Proof of Proposition \ref{prop1}}
Let $f:M\to M$ be as in the hypothesis of Proposition \ref{prop1}.

Recall that for every $r>0$ and $x\in M$ we denote by $\W^s_r(\W^c_r(x))$ the set $\bigcup_{y\in \W^c_r(x)} \W^s_r(y)$. As a consequence of $f$ acting quasi-isometrically in the future on $\W^c$ the Corollary \ref{cor2.3} gives us that $\bigcup_{r \geq 0} \W^s_r(\W^c_r(x))=\W^{cs}(x)$ for every $x$ in $M$.

\begin{lemma} There exists $R>0$ such that  $\W^s_R(\W^c_R(x))\cap \W^u(y)\neq \emptyset$ for every $x$ and $y$ in $M$.
\end{lemma}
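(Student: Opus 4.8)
The plan is to prove the statement first \emph{pointwise} and then make it uniform by a compactness argument. The pointwise statement, that $\W^{cs}(x)\cap \W^u(y)\neq\emptyset$ for all $x,y\in M$, is immediate: since $\W^{cs}$ is minimal the leaf $\W^{cs}(x)$ is dense in $M$, so fixing a local product structure scale $\epsilon>0$ for the transverse foliations $\W^{cs}$ and $\W^u$ and choosing $w\in\W^{cs}(x)$ with $d(w,y)<\epsilon$, the unique point of $\W^{cs}_{loc}(w)\cap\W^u_{loc}(y)$ lies in $\W^{cs}(x)\cap\W^u(y)$.

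Now fix $x,y$ and a point $z\in\W^{cs}(x)\cap\W^u(y)$. Since (by the analogue of Corollary \ref{cor2.3} for center-stable leaves, recalled above) $\bigcup_{r\geq0}\W^s_r(\W^c_r(x))=\W^{cs}(x)$, and since the sets $U_r(x):=\bigcup_{v\in\W^c(x),\,d_{\W^c}(x,v)<r}\{q\in\W^s(v):d_{\W^s}(v,q)<r\}$ are open in $\W^{cs}(x)$ (by continuity of the subfoliation $\W^s$ inside $\W^{cs}$-leaves) and exhaust $\W^{cs}(x)$, we may pick $r_0>0$ with $z\in U_{r_0}(x)$; in particular $z$ has a whole $\W^{cs}(x)$-neighbourhood contained in $\W^s_{r_0}(\W^c_{r_0}(x))$ and, writing $z\in\W^s(v)$ with $d_{\W^c}(x,v)<r_0$ and $d_{\W^s}(v,z)<r_0$, there is a slack $\theta>0$ by which both strict inequalities can be further strengthened.

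The uniform statement will follow by contradiction. If no $R$ works, then for each $n$ there are $x_n,y_n$ with $\W^s_n(\W^c_n(x_n))\cap\W^u(y_n)=\emptyset$; passing to a subsequence assume $x_n\to x$ and $y_n\to y$, and fix $z\in\W^{cs}(x)\cap\W^u(y)$ and $r_0,\theta$ as above. Using the continuity in the Hausdorff topology of the plaques $\W^c_{r_0}(\cdot)$, $\W^s_{r_0}(\cdot)$ and $\W^u_{\ell}(\cdot)$, I would produce $v_n\in\W^c_{r_0}(x_n)$ and $z_n\in\W^s_{r_0}(v_n)$ with $z_n\to z$ (retaining the slack $\theta$ in $d_{\W^c}(x_n,v_n)$ and $d_{\W^s}(v_n,z_n)$), together with $z_n'\in\W^u(y_n)$ with $z_n'\to z$. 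Working in a local product chart of $\W^{cs}$ and $\W^u$ around $z$, for $n$ large the point $\zeta_n:=\W^{cs}_{loc}(z_n)\cap\W^u_{loc}(z_n')$ is well defined; it lies in $\W^u(z_n')=\W^u(y_n)$ by construction, and it remains to check that $\zeta_n\in\W^s_{r_0}(\W^c_{r_0}(x_n))$.

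This last point is the main obstacle, and the subtlety is that one cannot freely exchange a short center arc past a bounded stable arc with control on lengths, since only quasi-isometry \emph{in the future} is available. Instead I would argue with the stable leaf directly: $\W^s(\zeta_n)$ lies in the same $\W^{cs}$-plaque as $z_n$ and, by continuity of $\W^s$, shadows $\W^s(z_n)=\W^s(v_n)$ along the stable arc of length $<r_0$ from $z_n$ to $v_n$; hence $\W^s(\zeta_n)$ meets $\W^c(x_n)$ at a point $\zeta_n^{*}$ with $\zeta_n^{*}\to v$, and the slack $\theta$ forces $d_{\W^c}(x_n,\zeta_n^{*})<r_0$ and $d_{\W^s}(\zeta_n^{*},\zeta_n)<r_0$ for $n$ large, i.e. $\zeta_n\in\W^s_{r_0}(\W^c_{r_0}(x_n))$. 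Since $\W^s_{r_0}(\W^c_{r_0}(x_n))\subseteq\W^s_n(\W^c_n(x_n))$ as soon as $n\geq r_0$, this contradicts $\W^s_n(\W^c_n(x_n))\cap\W^u(y_n)=\emptyset$, and therefore a uniform $R$ exists.
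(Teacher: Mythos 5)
Your proof is correct and follows the same compactness-by-contradiction route the paper takes: pass to accumulation points $x,y$, use density of $\W^{cs}(x)$ to get $z\in\W^{cs}(x)\cap\W^u(y)$, and show this forces $\W^s_{R_n}(\W^c_{R_n}(x_n))\cap\W^u(y_n)\neq\emptyset$ for large $n$. You spell out the local-product-structure and stable-holonomy-continuity details that the paper compresses into the single phrase ``as we are dealing with leaves of foliations tangent to continuous bundles.''
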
 
\begin{proof}
By contradiction, suppose there exist $R_n\xrightarrow{n}\infty$ and sequences $\{x_n\}_n$ and $\{y_n\}_n$ in $M$ such that $\W^s_{R_n}(\W^c_{R_n}(x_n))\cap \W^u(y_n)= \emptyset$ for every $n$. Then, as we are dealing with leaves of foliations tangent to continuous bundles, by taking accumulation points $x$ and $y$ of the sequences $\{x_n\}_n$ and $\{y_n\}_n$ we obtain that $\W^{cs}(x)\cap \W^u(y)=\emptyset$. This contradicts that $\W^{cs}(x)$ is dense in $M$.
\end{proof}

As $f$ acts quasi-isometrically in the future on $\W^c$ there exists $L>0$ such that $f^n(\W^c_R(x))$ is contained in $\W^c_L(f^n(x))$ for every $n\geq 0$ and $x\in M$.

Proposition  \ref{prop1} is a direct consequence of the following lemma.

\begin{lemma} For every $x$ and $y$ in $M$ we have that $\W^c_L(x) \cap \overline{ \W^u(y)}\neq \emptyset$.
\end{lemma}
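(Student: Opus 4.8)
The plan is to feed the previous lemma into a \emph{pull-back then push-forward} argument, using the quasi-isometric action in the future on $\W^c$ to keep center segments bounded. Fix $x$ and $y$ in $M$. For each integer $n\geq 0$ apply the previous lemma to the pair $(f^{-n}(x),f^{-n}(y))$: this produces a point $z_n\in \W^u(f^{-n}(y))$ together with a point $w_n\in \W^c_R(f^{-n}(x))$ lying on a common stable leaf, at distance $d_{\W^s}(z_n,w_n)\leq R$ measured along that leaf. I will then push the whole configuration forward by $f^n$, so that it returns to the fixed points $x$ and $y$.

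Three facts make this work. First, since $\W^u$ is $f$-invariant, $f^n(z_n)\in f^n(\W^u(f^{-n}(y)))=\W^u(y)$ for every $n$. Second, the constant $L$ was fixed precisely so that $f^n(\W^c_R(q))\subset \W^c_L(f^n(q))$ for all $q\in M$ and $n\geq 0$; applying this with $q=f^{-n}(x)$ gives $f^n(w_n)\in \W^c_L(x)$ for every $n$. Third, $f$ contracts stable leaves uniformly, $\|Df^n_{E^s(\cdot)}\|<C\lambda^n$ with constants $C>0$ and $\lambda\in(0,1)$ independent of $n$; integrating this bound along the stable arc joining $z_n$ to $w_n$ yields $d_{\W^s}(f^n(z_n),f^n(w_n))\leq C\lambda^n\,d_{\W^s}(z_n,w_n)\leq C\lambda^n R$, which tends to $0$, and hence $d(f^n(z_n),f^n(w_n))\to 0$ in $M$.

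To conclude I invoke compactness of the ball $\W^c_L(x)$. The sequence $(f^n(w_n))_n$ is contained in $\W^c_L(x)$, so some subsequence converges to a point $w_\ast\in \W^c_L(x)$; along that subsequence $f^n(z_n)\to w_\ast$ as well, because $d(f^n(z_n),f^n(w_n))\to 0$. Since each $f^n(z_n)$ lies in $\W^u(y)$, the limit $w_\ast$ lies in $\overline{\W^u(y)}$. Therefore $w_\ast\in \W^c_L(x)\cap\overline{\W^u(y)}$, which is the assertion of the lemma; Proposition \ref{prop1} then follows immediately, since every minimal unstable lamination $A$ equals $\overline{\W^u(y)}$ for each of its points $y$.

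\textbf{Main obstacle.}
The point to watch — and the reason for pulling back first — is that the stable contraction must be applied with a factor $C\lambda^n$ depending only on $f$, not on $n$ or on the varying points, which is legitimate here because the pairs $z_n,w_n$ are uniformly $R$-close along stable leaves for every $n$; it is exactly this uniformity that forces the forward images $f^n(z_n)$ to remain trapped within $C\lambda^n R$ of the compact set $\W^c_L(x)$ rather than escaping along $\W^u(y)$. Starting the configuration at $x,y$ and iterating forward would instead place the escaping point near $\W^c_L(f^n(x))$ rather than near $\W^c_L(x)$; interchanging past and future is what pins the base points down. Everything else is routine bookkeeping.
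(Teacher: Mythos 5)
Your argument is correct and is essentially the paper's proof, just written out in more detail: pull back to $f^{-n}(x),f^{-n}(y)$, apply the previous lemma, push forward using stable contraction together with the quasi-isometric bound $f^n(\W^c_R(f^{-n}(x)))\subset\W^c_L(x)$, and pass to the limit. The paper compresses the final step by noting directly that $\W^s_{r_n}(\W^c_L(x))\cap\W^u(y)\neq\emptyset$ with $r_n\to 0$, which is the same compactness argument you carry out via a convergent subsequence.
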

\begin{proof}
Let us fix $x$ and $y$ arbitrary points in $M$. For every $n\geq 0$ we have that $\W^s_R(\W^c_R(f^{-n}(x)))\cap \W^u(f^{-n}(y))\neq \emptyset$. Then, as $f$ contracts distances uniformly to the future inside stable leaves, there exists $r_n\xrightarrow{n} 0$ such that the image of $\W^s_R(\W^c_R(f^{-n}(x)))$ by $f^n$ is contained in $\W^s_{r_n}(\W^c_L(x))$. We obtain that $\W^s_{r_n}(\W^c_L(x)) \cap \W^u(y)\neq \emptyset$ for every $n\geq 0$ and then $\W^c_L(x) \cap \overline{ \W^u(y)}\neq \emptyset$.
\end{proof}

\section{Proof of Proposition \ref{prop2}}

From now on let $f:M\to M$ and $L>0$ be as in the hypothesis of Proposition \ref{prop2} and suppose that there exist $A$ and $A'$ different minimal unstable laminations in $M$. We will see that under this hypothesis $\W^c$ has to admit a global section.

\subsection{The sets $(A,A')_c$ and $(A',A)_c$}
The goal of this subsection is to prove that the sets $(A,A')_c$ and $(A',A)_c$ defined below are disjoint open subsets of $M$ that ``separates'' the disjoint and closed subsets $[A]_c$ and $[A']_c$ (see Proposition \ref{prop22}). 

Let us fix from now on an orientation for $\W^c$ and denote $\phi^c:M\times \mathbb{R} \to M$ a non-singular flow that parameterizes the leaves of $\W^c$. 

\begin{notation} For $x$ and $y$ in the same center leaf we will say that $x\leq y$ if $y= \phi^c_t(x)$ for some $t\geq 0$. If this is the case, let $(x,y)_c$ and $[x,y]_c$ denote the open and closed center segments from $x$ to $y$.
\end{notation}

Let us define the sets: 
\begin{equation*}
  \label{eq:t}
  \begin{aligned}
    [A]_c&=\bigcup \{[x,y]_c: x\in A, y\in A,[x,y]_c\cap A'=\emptyset\},\\        
    (A,A')_c&=\bigcup \{(x,y)_c: x\in A, y\in A',(x,y)_c\cap (A\cup A')=\emptyset\}.
  \end{aligned}
\end{equation*}
Notice that the center segments in the definition of $[A]_c$ may be singletons. We define analogously the sets $[A']_c$ and $(A',A)_c$. By an abuse of notation, we will consider this sets both as subsets of $M$ and as an abstract collection center segments.

The following remark is a direct consequence from the definitions.

\begin{remark}
The manifold $M$ is equal to the disjoint union $[A]_c\cup (A,A')_c \cup [A']_c \cup (A',A)_c$.
\end{remark}

Let us first point out that:

\begin{lemma}\label{qiprop2}
The map $f$ acts quasi-isometrically on $\W^c$. 
\end{lemma}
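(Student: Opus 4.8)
The statement to prove is Lemma~\ref{qiprop2}: that $f$ acts quasi-isometrically on $\W^c$ (in both time directions), given only the standing hypotheses of Proposition~\ref{prop2} --- namely that $f$ is dynamically coherent partially hyperbolic with $\dim(E^c)=1$, that $\W^c$ is orientable, that there is a uniform $L>0$ with $\W^c_L(x)\cap A\neq\emptyset$ for \emph{every} minimal unstable lamination $A$, and that there are (at least) two distinct minimal unstable laminations $A$ and $A'$. Note we already have quasi-isometry \emph{in the future} on $\W^c$ for free only if we were in the setting of Proposition~\ref{prop1}; here we are not assuming that, so the point is genuinely to extract both-direction quasi-isometry from the ``two laminations + uniform bound $L$'' hypothesis. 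The plan is to show that every center leaf meets $A$ in a uniformly (coarsely) $L$-dense way, and then to use the $f$-invariance of $A$ together with the fact that $f$ permutes the ``gaps'' between $A$ and $A'$ to bound how much $f^{\pm n}$ can stretch a short center segment.

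\textbf{Key steps.} First I would upgrade the hypothesis ``$\W^c_L(x)\cap A\neq\emptyset$ for all $x$'' to: every center leaf $\W^c(x)$ intersects $A$, and along that leaf the points of $A$ form a subset that is $2L$-syndetic (no two consecutive points of $A\cap\W^c(x)$, in the order given by the orientation $\phi^c$, are more than $2L$ apart in arclength). Indeed, if there were a center segment of arclength $>2L$ inside $\W^c(x)$ disjoint from $A$, its midpoint $z$ would have $\W^c_L(z)\cap A=\emptyset$, contradicting the hypothesis applied to $z$. Symmetrically the same holds for $A'$, and in fact $A\cup A'$ is $2L$-syndetic along every center leaf. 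Second, since $A$ is compact $\W^u$-saturated and $f$-invariant (hence $f(A)=A$), and likewise $f(A')=A'$, the map $f$ sends center segments with endpoints in $A$ to center segments with endpoints in $A$: more precisely, for $x\in A$, the image $f(\W^c_r(x))$ is a center segment around $f(x)\in A$, and the finitely-many (by syndeticity --- at most roughly $r/(\text{min gap})$, but we only need boundedness) points of $A$ inside $\W^c_r(x)$ map to points of $A$ inside $f(\W^c_r(x))$. The crucial observation is that the complementary gaps between consecutive points of $A\cup A'$ along a center leaf, i.e.\ the connected components of $(A,A')_c\cup(A',A)_c\cup\big(\text{interior of }[A]_c\big)\cup\big(\text{interior of }[A']_c\big)$ restricted to a leaf, are permuted by $f$, because $f$ preserves $A$, $A'$, and the orientation of $\W^c$ (after possibly passing to $f$ or $f^2$; but $f$ being a discretized-flow-type map with positive $\tau$... here we are in the abstract setting, so one uses that $f$ preserves the orientation of $\W^c$ --- this is where I would invoke that $\phi^c$ can be chosen $f$-equivariantly up to orientation, or argue the gap-permutation directly).

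\textbf{From gap-permutation to quasi-isometry.} Each gap $J$ is an open center segment whose endpoints lie in $A\cup A'$; $f$ maps the collection of gaps bijectively to itself. Consider a short center segment $[x,y]_c$ with, say, arclength $\le l$ for a small $l>0$ to be chosen. By $2L$-syndeticity it is contained in the union of at most $N:=\lceil l/\delta\rceil + 2$ consecutive gaps, where $\delta>0$ is a uniform lower bound for gap lengths --- \emph{if} such a $\delta$ exists. Getting a uniform lower bound on gap lengths is the delicate point (see below); granting it, $f^n([x,y]_c)$ is contained in the union of $f^n$ of those $\le N$ gaps, which is again at most $N$ gaps, and each gap has length $\le 2L$ by syndeticity of $A\cup A'$ applied to \emph{every} leaf (gap endpoints are in $A\cup A'$, so a gap contains no point of $A\cup A'$ in its interior, hence has length $\le 2L$). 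Therefore $f^n([x,y]_c)\subset \W^c_{2LN}(f^n(x))$ for all $n\in\mathbb{Z}$, which is exactly quasi-isometry with constants $l$ and $L':=2LN$. I would similarly handle segments that straddle points of $A\cup A'$ by the same counting.

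\textbf{Main obstacle.} The genuine difficulty is the uniform \emph{lower} bound $\delta>0$ on the lengths of the gaps (equivalently, that $A\cup A'$ cannot accumulate on itself within a center leaf with arbitrarily small gaps uniformly over $M$) --- without it the counting argument collapses because a short segment could meet infinitely many gaps. I expect to resolve this by a compactness/no-accumulation argument: if gaps of length $\to 0$ existed, pick center segments $[x_n,y_n]_c$ that are gaps with $\length\to 0$, pass to a limit $x_n\to x$; by continuity of the bundles and of the foliation $\W^c$ (the fact ``$\W^c_R(x_n)\to\W^c_R(x)$ in Hausdorff topology'' quoted in Section~\ref{prelim1}), the endpoints converge to a single point $z\in A\cup A'$ which is a limit of endpoints of gaps from \emph{both} sides; but then $z$ would be a point of $A$ (say) that is not in the interior of $[A]_c$ and is approached by points of $A'$, i.e.\ $z\in\overline{A'}=A'$, contradicting $A\cap A'=\emptyset$ (the laminations are distinct minimal sets of $\W^u$, hence disjoint). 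An alternative, cleaner route if the direct argument is fiddly: prove quasi-isometry \emph{in the future} first by the Proposition~\ref{prop1} mechanism applied ``within'' $\W^{cs}$-leaves --- but since Proposition~\ref{prop2} deliberately does not assume $\W^{cs}$ minimal, I would instead lean on the gap argument above and isolate the no-small-gaps claim as a short sublemma proved by the compactness argument just sketched.
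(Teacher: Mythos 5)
Your gap-permutation scheme is a substantially more elaborate version of the same underlying idea, but as written it has a genuine hole exactly at the point you flagged as the ``main obstacle.'' You need a uniform lower bound $\delta>0$ on gap lengths so that $N=\lceil l/\delta\rceil+2$ makes sense, and your proposed compactness argument only excludes short \emph{transition} gaps (one endpoint in $A$, the other in $A'$): for those, a degenerate limit $z$ would lie in $\overline{A}\cap\overline{A'}=A\cap A'=\emptyset$, giving the desired contradiction. But a gap can also be a maximal center interval with \emph{both} endpoints in $A$ and interior disjoint from $A\cup A'$; if $A$ meets a center leaf in a Cantor set, which is the generic picture for a lamination, such intra-$A$ gaps occur at every scale, the limit point $z$ simply lies in $A$, and there is no contradiction. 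So $\delta$ does not exist in general, the count $N$ is unbounded, and the argument does not close. (A secondary issue: you assert ``hence $f(A)=A$,'' but a minimal unstable lamination need not be $f$-invariant; $f$ only permutes the set of minimal unstable laminations, and one must phrase things using $f^n(A)$, $f^n(A')$.)

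The paper's proof sidesteps both issues and needs no gap combinatorics, no syndeticity, and no orientation discussion. Let $d=d(A,A')>0$ and take any $l<d/2$. Then $\W^c_l(x)$ cannot meet both $A$ and $A'$: two such intersection points would be at intrinsic, hence ambient, distance $<d$. On the other hand, any center segment of length $\geq 2L$ meets \emph{every} minimal unstable lamination (this is the standing hypothesis $\W^c_L(m)\cap B\neq\emptyset$ for every $m$ and every minimal unstable lamination $B$), in particular it meets $f^n(A)$ and $f^n(A')$, which are themselves minimal unstable laminations. Hence if $f^n(\W^c_l(x))$ had length $>2L$ for some $n\in\mathbb{Z}$, it would meet both $f^n(A)$ and $f^n(A')$, and applying $f^{-n}$ would show $\W^c_l(x)$ meets both $A$ and $A'$, a contradiction. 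Therefore $f^n(\W^c_l(x))\subset\W^c_{2L}(f^n(x))$ for all $n$ and all $x$, which is quasi-isometry with constants $l$ and $2L$. If you repair your count by tallying only the components of $(A,A')_c\cup(A',A)_c$ that the segment meets (those do have the uniform length lower bound $d(A,A')$) and bound the intervening $[A]_c$- and $[A']_c$-runs by $2L$ via syndeticity, you will in effect have rediscovered exactly this argument, with extra bookkeeping.
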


\begin{proof}
Let $d>0$ be the distance between the disjoint minimal unstable laminations $A$ and $A'$. 

We claim that, as every center segment of length $2L>0$ intersects every minimal unstable lamination, then $f^n(\W^c_d(x))$ can not have length larger than $2L$ for any $x\in M$ and $n\in \mathbb{Z}$. 

By contradiction, if the length of $f^n(\W^c_d(x)))$ is larger that $2L$ for some $x\in M$ and $n\in \mathbb{Z}$ then $f^n(\W^c_d(x))$ intersects both minimal unstable laminations $f^n(A)$ and $f^n(A')$. Then $\W^c_d(x)$ has to intersect both $A$ and $A'$. This gives us a contradiction and the claim is proved. We obtain that $f$ is acts quasi-isometrically on $\W^c$ with constants $d,2L>0$.
\end{proof}

As a consequence of the previous lemma the equality $\W^u(\W^c(x))=\W^{cu}(x)$ is verified for every $x\in M$ (Corollary \ref{cor2.3}) and we will be able to make `long' transports by unstable holonomy of any center segment (Lemma \ref{lemma2.1}).

For every $x\in A$ let us define $S(x)$ to be the first point of $A'$ in $\W^c(x)$ in the direction of the flow $\phi^c$. That is, $S(x)$ is such that $(x,S(x))_c$ is a center segment in $(A,A')_c$. Let us define $l_S(x)$ as the length of the arc $[x,S(x)]_c$.

\begin{lemma}
The function $l_S:A\to \mathbb{R}$ is lower semicontinuous and continuous in a residual subset of $A$.
\end{lemma}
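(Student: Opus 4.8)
The plan is to establish lower semicontinuity of $l_S$ directly from the topology of the situation, and then to derive continuity on a residual set as a formal consequence. For lower semicontinuity, I would fix $x_0\in A$ and a sequence $x_n\to x_0$ in $A$, and argue that $\liminf_n l_S(x_n)\geq l_S(x_0)$. Suppose not: passing to a subsequence, $l_S(x_n)\to \ell < l_S(x_0)$. Using the flow $\phi^c$ parameterizing $\W^c$ and the fact that $\W^c$ is tangent to a continuous bundle, the points $S(x_n)$ lie at bounded center-arclength from $x_n$ (by Lemma \ref{qiprop2} every such arc has length at most $2L$), so after a further subsequence $S(x_n)$ converges to some point $z\in \W^c(x_0)$ with $[x_0,z]_c$ of length $\ell$. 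Since $A'$ is closed and $S(x_n)\in A'$, we get $z\in A'$. But then $(x_0,z)_c$ is a center segment from $x_0$ strictly shorter than $[x_0,S(x_0)]_c$ whose endpoint lies in $A'$, contradicting the definition of $S(x_0)$ as the \emph{first} point of $A'$ on $\W^c(x_0)$ in the flow direction --- provided we also know $z$ is genuinely ``ahead'' of $x_0$, i.e. $x_0\le z$. That last point follows because each $S(x_n)$ is ahead of $x_n$ at bounded distance, and the flow $\phi^c$ depends continuously on the base point, so the ordering passes to the limit. Hence no such sequence exists and $l_S$ is lower semicontinuous.

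Once lower semicontinuity is in hand, continuity on a residual set is automatic: a lower semicontinuous real-valued function on a (locally) compact metric space (here the closed set $A\subset M$) is continuous precisely on a dense $G_\delta$. Concretely, the set of discontinuity points of a lower semicontinuous function $g$ is $\bigcup_{k}\{x : \limsup_{y\to x} g(y) \ge g(x) + 1/k\}$, and each of these sets is closed with empty interior (if it contained an open set $U$, one could find within $U$ a point where $g$ exceeds its value on a dense subset of $U$ by a fixed amount, contradicting that $g$ restricted to $U$, being lsc and bounded, attains values arbitrarily close to its infimum on any subset), so the discontinuity set is meager and $l_S$ is continuous on the complementary residual set.

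The main obstacle I anticipate is the subtlety in the lower semicontinuity step concerning the \emph{direction} of the flow and the possibility that the limiting point $z$ coincides with $x_0$ or wraps past it along a center leaf that happens to be a circle. The boundedness of all the arcs $[x_n,S(x_n)]_c$ by $2L$ (Lemma \ref{qiprop2}) together with the non-degeneracy and continuity of $\phi^c$ is exactly what rules out the wrap-around pathology: in suitable flow-time coordinates, $S(x_n)=\phi^c_{l_S(x_n)}(x_n)$ with $l_S(x_n)$ in a bounded interval, so $z=\phi^c_\ell(x_0)$ with $\ell\in[0,2L]$ well-defined, and the contradiction with minimality of $l_S(x_0)$ is clean. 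One should also note that $z\neq x_0$ unless $\ell=0$, and $\ell=0$ would force $x_0\in\overline{A'}=A'$, impossible since $A\cap A'=\emptyset$; so $z$ is a legitimate competitor and the argument closes.
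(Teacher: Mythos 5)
Your proposal is correct and follows essentially the same route as the paper: lower semicontinuity via the bound $l_S\le 2L$, the closedness of $A'$, and the minimality of $S(x)$ (the paper argues directly that $[x,S(x)]_c\subset[x,y]_c$ for any accumulation point $y$ of $S(x_n)$, whereas you phrase it as a contradiction, but these are the same observation), followed by the standard fact that a lower semicontinuous function on a Baire space is continuous on a residual set. One small caveat: your parenthetical justification for why the sets $\{x:\limsup_{y\to x} g(y)\ge g(x)+1/k\}$ are nowhere dense is garbled as written --- the actual contradiction comes from iterating the inequality to build an increasing chain $g(x_{n+1})>g(x_n)+\tfrac{1}{2k}$ and violating the upper bound $2L$, not from anything about infima --- but this is a standard fact and the paper is similarly terse at that point.
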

\begin{proof}
Since $l_S$ is bounded from above by the constant $2L>0$ then, for every sequence $\{x_n\}_n\subset A$ that converges to a point $x$ in $A$, any accumulation point $y$ of $S(x_n)$ lies in $\W^c(x)$. Since $A'$ is closed, $y$ is a point in $A'$. Then $[x,S(x)]_c$ has to be contained in $[x,y]_c$ for any such an accumulation point $y$. This implies that $l_S(x)\leq \liminf_n l_S(x_n)$ and we obtain that $l_S$ is lower semicontinuous.

To see that $A$ contains a residual subset of continuity points for $l_S$ we can consider the sets $F_m=\{x\in A:\exists \text{ } x_n \xrightarrow{n}x \text{ s.t. } \liminf_n l_S(x_n) \geq l_S(x) +\frac{1}{m}\}$ for every $m$ in $\mathbb{Z}^+$. The set of continuity points of $l_S$ coincides with $A\setminus \bigcup_m F_m$. It is direct to prove that each $F_m$ is a closed nowhere dense subset of $A$. Then $A\setminus \bigcup_m F_m$ is a residual set in $A$ by Baire category theorem.


\end{proof}

For a continuity point $x$ of $l_S$ every sequence $\{x_n\}_n\subset A$ converging to $x$ verifies that the center segments $[x_n,S(x_n)]_c$ converges in the Hausdorff topology to $[x,S(x)]_c$. For a discontinuity point this is not the case, however, we will see in the following lemma that the failure of continuity is not so bad. We will use that the behavior of $S$ near a continuity point can be extended by unstable holonomy to any point of $A$ thanks to Lemma \ref{lemma2.1}:

\begin{lemma}\label{lemma21}
Let $\{x_n\}_n\subset A$ be a sequence converging to a point $x\in A$. Up to taking a subsequence, suppose that $\{S(x_n)\}_n$ converges to a point $y\in A'$. Then $y$ lies in $\W^c(x)$, the center segments $[x_n,S(x_n)]_c$ converge in the Hausdorff topology to $[x,y]_c$ and $[S(x),y]_c$ is a center segment (possibly degenerate to a point) contained in $[A']_c$.
\end{lemma}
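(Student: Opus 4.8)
The plan is to exploit the fact that the failure of continuity of $l_S$ at $x$ is entirely governed by unstable holonomy, via Lemma \ref{lemma2.1}. First I would fix a point $x_0\in A$ that is a continuity point of $l_S$ (such points form a residual set, hence are dense). Since $A$ is a minimal unstable lamination and $A'$ is $\W^u$-saturated, both $x_0$ and $S(x_0)$ have dense unstable leaves in $A$ and $A'$ respectively; in fact $S(x_0)\in\W^{cu}(x_0)\cap A'$, and by Corollary \ref{cor2.3} (available thanks to Lemma \ref{qiprop2}) we have $\W^{cu}(x_0)=\W^u(\W^c(x_0))$, so $S(x_0)$ lies on some unstable curve issuing from a point of $[x_0,S(x_0)]_c$. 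The key point is that the configuration ``center segment $[x_0,S(x_0)]_c$ from a point of $A$ to the first point of $A'$'' transports coherently by unstable holonomy: if $x^u$ is an unstable curve with $x^u(0)=x_0$, then by Lemma \ref{lemma2.1} we get an unstable curve $y^u$ and center segments $[x^u(t),y^u(t)]_c$ varying continuously, and one checks by a connectedness/openness argument along $[0,1]$ that $y^u(t)\in A'$ and $(x^u(t),y^u(t))_c\cap(A\cup A')=\emptyset$ for all $t$, since both $A$ and $A'$ are invariant under unstable holonomy. In other words, $y^u(t)=S(x^u(t))$ identically. Consequently the behavior of $S$ at the continuity point $x_0$ propagates along unstable leaves: for every $x'\in\overline{\W^u(x_0)}\cap A=A$, the map $S$ and the segments $[x',S(x')]_c$ behave as nicely as at $x_0$ — more precisely, if $\{x'_n\}\subset A$ converges to $x'$ with each $x'_n$ close (in $\W^u$) to the corresponding transport of $x_0$, then $[x'_n,S(x'_n)]_c\to[x',S(x')]_c$.

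Now take a general sequence $\{x_n\}_n\subset A$ with $x_n\to x\in A$ and, up to subsequence, $S(x_n)\to y\in A'$ (the limit lies in $A'$ because $A'$ is closed, and in $\W^c(x)$ because $l_S\le 2L$, so $[x_n,S(x_n)]_c$ is a bounded family of center arcs converging in Hausdorff topology to $[x,y]_c$ — this part is essentially the argument already used for lower semicontinuity of $l_S$). It remains to show that $[S(x),y]_c$ is a (possibly degenerate) center segment inside $[A']_c$, i.e. that $S(x)\le y$ and $(S(x),y)_c\cap A=\emptyset$. By lower semicontinuity, $l_S(x)\le\liminf l_S(x_n)=\length[x,y]_c$, which gives $S(x)\le y$ with $S(x)$ on the arc $[x,y]_c$; degeneracy happens exactly when $x$ is a continuity point. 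To see $(S(x),y)_c$ contains no point of $A$: since $S(x)\in A'$ and $A'$ is the limit of $S(x_n)$, and since the open arcs $(x_n,S(x_n))_c$ lie in $(A,A')_c$ hence meet neither $A$ nor $A'$, any point of $A$ in the interior of $[x,y]_c$ would have to lie in $[S(x),y]_c$; but then the first point of $A'$ after $x$, namely $S(x)$, would be reached before that point of $A$, placing that $A$-point in $[S(x),y]_c\setminus\{S(x)\}$ — one then needs to rule this out using the unstable-holonomy rigidity of $S$ described above, applied to $x$: transporting the good configuration from a nearby continuity point forces $(S(x),y)_c$ to be free of $A$, as otherwise one produces, by holonomy transport, a contradiction with $S(x_n)$ being the \emph{first} point of $A'$ after $x_n$.

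The main obstacle I anticipate is this last point: showing $(S(x),y)_c\cap A=\emptyset$. The difficulty is that $x$ may be a discontinuity point, so I cannot argue directly by continuity of $S$; instead I must leverage that $A$ and $A'$ are swapped-role symmetric and both invariant by unstable holonomy, and that along the transport given by Lemma \ref{lemma2.1} the ``first hitting'' structure is preserved. Concretely, the plan is: approximate $x$ by a continuity point $x_0$ of $l_S$ along an unstable curve (possible since $\W^u(x)$ accumulates on a continuity point — both are in the minimal set $A$), transport $[x_0,S(x_0)]_c$ by unstable holonomy back toward $x$, and use the continuity in $t$ of the transported segments together with the identity $y^u(t)=S(x^u(t))$ to squeeze $[x,S(x)]_c$ and $[x,y]_c$ and conclude that the ``extra'' segment $[S(x),y]_c$ lies entirely in $[A']_c$. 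A short additional check is needed that unstable holonomy genuinely preserves $A$ and $A'$ (immediate: they are $\W^u$-saturated) and the open condition ``$(x^u(t),y^u(t))_c$ misses $A\cup A'$'' is both open and closed in $t$, hence holds throughout once it holds at $t=0$.
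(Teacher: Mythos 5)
Your proposal correctly isolates the two main ingredients the paper also uses: the residual set of continuity points of $l_S$, and the fact that $S$ commutes with unstable holonomy (your openness/closedness argument for $y^u(t)=S(x^u(t))$ is essentially the paper's ``as $A$ and $A'$ are $\W^u$-invariant, $S(x^u_n(t))=y^u_n(t)$ for all $t$''). The easy parts — $y\in\W^c(x)$, Hausdorff convergence $[x_n,S(x_n)]_c\to[x,y]_c$, and $S(x)\leq y$ via lower semicontinuity — are also fine. But the crucial step $(S(x),y)_c\cap A=\emptyset$ is left as a ``squeeze'' heuristic, and I do not see how to close it as written. The concrete difficulty: transporting the \emph{single} segment $[x_0,S(x_0)]_c$ along an unstable curve to $x$ only reproduces $[x,S(x)]_c$; it says nothing about the extra piece $[S(x),y]_c$, which is produced by the limiting sequence $S(x_n)$ and is invisible to the holonomy of one arc. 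Also, the phrasing ``approximate $x$ by a continuity point $x_0$ along an unstable curve'' is imprecise — $\W^u(x)$ passes near a continuity point $z$, but need not contain any continuity point.

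The paper closes the gap with two pieces that your plan is missing. First, a \emph{local trapping claim}: fix a continuity point $z$ and a $\W^c$-foliation box $U_{S(z)}$ around $S(z)$ at positive distance from $A$, with far boundary disc $D_1$. Continuity of $l_S$ at $z$ forces, for $x,x_n\in A\cap B_\delta(z)$, all arcs $[x_n,S(x_n)]_c$ to end inside $U_{S(z)}$ without crossing $D_1$; passing to the Hausdorff limit traps $[S(x),y]_c$ in $U_{S(z)}$, which misses $A$. Without some such trapping mechanism I do not see how to show the extra segment avoids $A$ even in this local case. Second, to go from the local claim to arbitrary $\hat{x}_n\to\hat{x}$, the paper chooses a \emph{convergent family} of unstable arcs $x^u_n\to x^u$ with $x^u_n(0)=x_n\in\W^u(\hat{x}_n)\cap B_\delta(z)$ and $x^u_n(1)=\hat{x}_n$, applies the local claim to the sequence $x_n\to x:=x^u(0)$, and transports the \emph{whole sequence} of segments $[x_n,S(x_n)]_c$ (not just a single segment) along the $x^u_n$, using continuity of holonomy to take the limit at $t=1$. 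Your plan would need both of these to become a proof; as stated it establishes the conclusion only when $x$ is itself a continuity point, where the extra segment degenerates and the statement is easy.
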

\begin{proof}
We claim first that the lemma is true for every $x\in A$ in a neighborhood of a continuity point of $l_S$.

Indeed, let $z\in A$ be a continuity point of $l_S$ and consider $U_{S(z)}$ a small neighborhood of $S(z)$ at a positive distance from $A$. We can suppose that $U_{S(z)}$ is a foliation box of $\W^c$, that is, that $U_{S(z)}$ is the image of a homeomorphism $h:D\times [0,1] \to U_{S(z)}$ such that $D$ is a compact disc of dimension $\dim(M)-1$ and $h(\{x\}\times [0,1])$ is a center segment for every $x\in D$. Let us denote by $D_1$ the disc $h(D\times \{1\})$.

Since $z$ is a continuity point of $l_S$ we can consider $\delta>0$ such that for every $x\in A\cap B_\delta(z)$ we have that $S(x)$ lies in the interior of $U_{S(z)}$. In particular, the center segment $[x,S(x)]_c$ does not cross the disc $D_1$.

If $\{x_n\}_n\subset A$ is a sequence converging to a point $x\in A\cap B_\delta(z)$, then any accumulation point $y$ of $\{S(x_n)\}_n$ has to lie in $U_{S(z)}$. Up to a subsequence, let us assume that $S(x_n)\xrightarrow{\raisebox{-0.2ex}[0ex][0ex]{\scriptsize{$n$}}}y$. Then, as each $[x_n,S(x_n)]_c$ does not intersect $D_1$, the segments $[x_n,S(x_n)]_c$ need to converge in the Hausdorff topology to $[x,y]_c$ and the whole segment $[S(x),y]_c$ has to be contained $U_{S(z)}$ (see Figure \ref{fig0}). As $S(x)$ and $y$ are in $A'$ and $U_{S(z)}$ is disjoint from $A$ we conclude that $[S(x),y]_c$ is a center segment in $[A']_c$. This proves the first claim.
\begin{figure}[htb]
\def\svgwidth{0.8\textwidth}
\begin{center} 
{\scriptsize
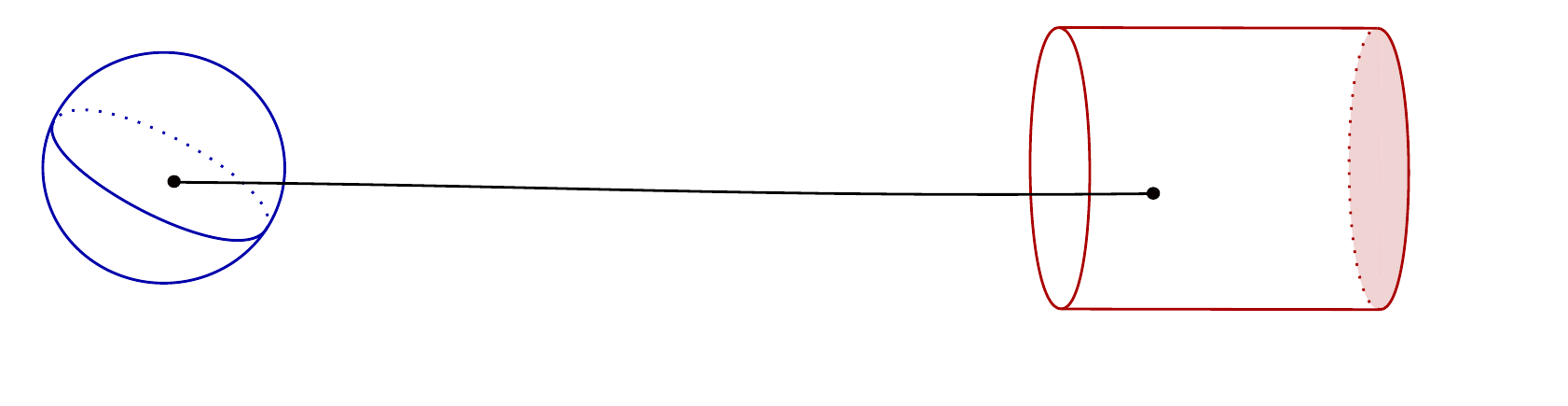
}
\caption{}\label{fig0} 
\end{center}
\end{figure}

Let us see now that the lemma is true for every point in $\hat{x} \in A$. We will use as an auxiliary construction a continuity point $z$ for $l_S$ and the neighborhoods $B_\delta(z)$ and $U_{S(z)}$ as in the previous claim.

Let $\{\hat{x}_n\}_n\subset A$ be a sequence converging to the point $\hat{x}\in A$. Suppose, up to taking a subsequence, that $S(\hat{x}_n)$ converges to a point $\hat{y}$. 
As $A$ is $\W^u$-minimal and $z\in A$ we can consider $x\in \W^u(\hat{x})\cap B_\delta(z)$ and $x^u:[0,1]\to M$ an unstable arc such that $x^u(0)=x$ and $x^u(1)=\hat{x}$. We can consider also unstable arcs $\{x^u_n:[0,1]\to M\}_n$ converging uniformly to $x^u$ such that $x^u_n(0)=x_n$ lies in $\W^u(\hat{x}_n)\cap B_\delta(z)$ and $x^u_n(1)=\hat{x}_n$.

\begin{figure}[htb]
\def\svgwidth{0.8\textwidth}
\begin{center} 
{\scriptsize
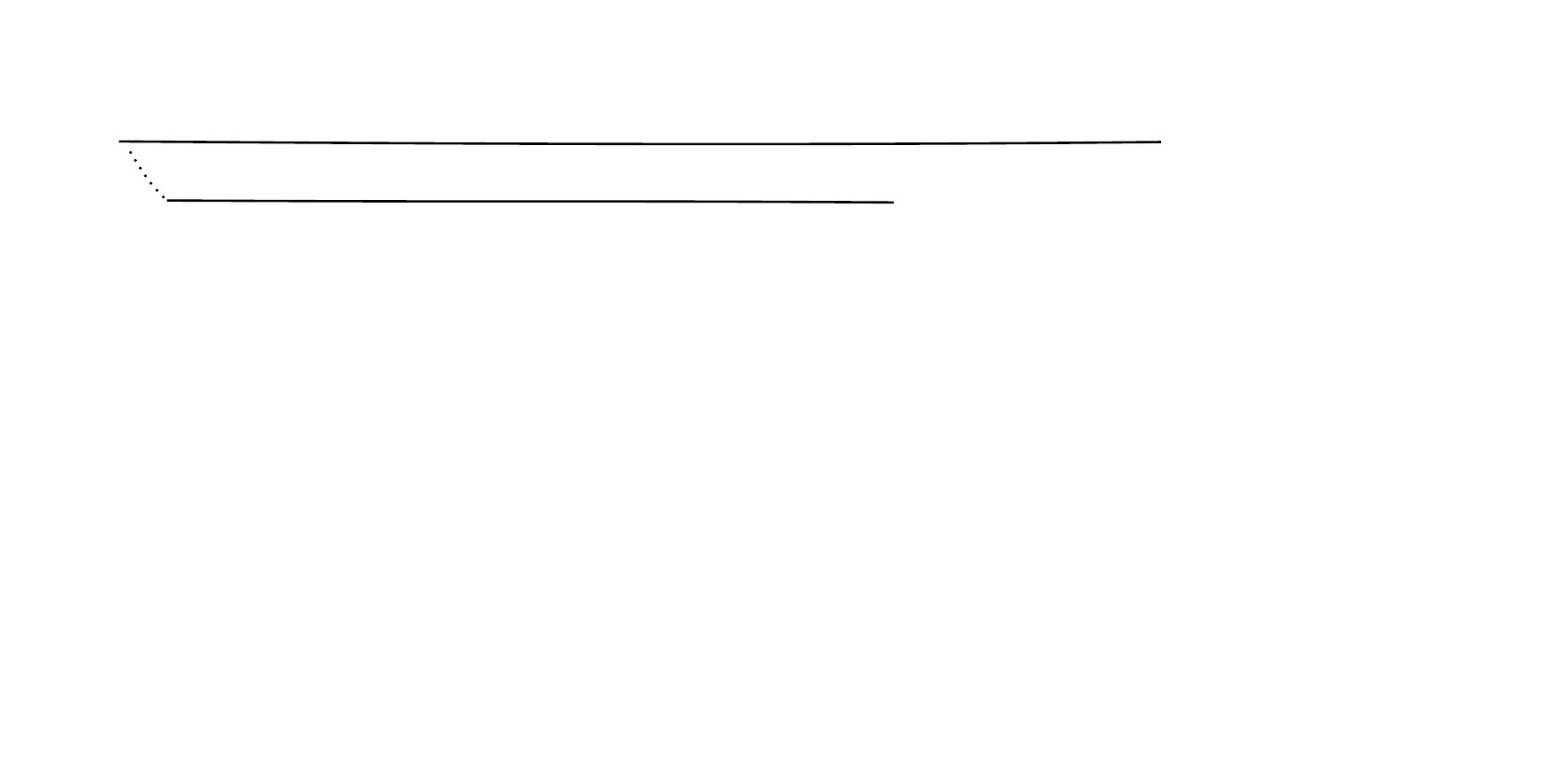
}
\caption{}\label{fig1} 
\end{center}
\end{figure}

Let us denote $y_n=S(x_n)$ for every $n$. Notice that $y_n\in U_{S(z)}$ since $x_n\in B_\delta(z)$. As the sequence $x_n$ converges to $x$ then by the first claim, up to taking a subsequence, $[x_n,y_n]_c$ converges in the Hausdorff topology to a center segment $[x,y]_c$ such that $[S(x),y]_c$ is in $[A']_c$.

Consider now $\{[x^u(t),y^u(t)]_c\}_{t\in [0,1]}$ the transport by unstable holonomy of $[x,y]_c$ along $x^u$ such that $[x^u(0),y^u(0)]_c=[x,y]_c$ (see Lemma \ref{lemma2.1}). Consider also $\{[x^u_n(t),y^u_n(t)]_c\}_{t\in [0,1]}$ the transport by unstable holonomy of $[x^u_n(0),y^u_n(0)]_c=[x_n,y_n]_c$ along $x^u_n$ for every $n$. Notice that, as $S(x_n)=y_n$ and $A$ and $A'$ are $\W^u$-invariant, then $S(x^u_n(t))=y^u_n(t)$ for every $t\in [0,1]$.

The foliations $\W^c$ and $\W^u$ have $C^1$ leaves tangent to continuous subbundles of $M$. So as $x^u_n$ converges uniformly to $x^u$ we have that $[x^u_n(t),y^u_n(t)]_c$ has to converge in the Hausdorff topology to $[x^u(t),y^u(t)]_c$ for every $t\in [0,1]$. In particular, the sequence $[x^u_n(1),y^u_n(1)]_c=[\hat{x}_n,S(\hat{x}_n)]_c$ needs to converge to $[x^u(1),y^u(1)]_c$. As $x^u(1)=\hat{x}$ and $S(\hat{x}_n)\xrightarrow{\raisebox{-0.2ex}[0ex][0ex]{\scriptsize{$n$}}} \hat{y}$ we obtain that $[x^u(1),y^u(1)]_c=[\hat{x},\hat{y}]_c$. Then the sequence $[\hat{x}_n,S(\hat{x}_n)]_c$ converges to $[\hat{x},\hat{y}]_c$. 

Finally, from the first claim, $[S(x),y]_c=[S(x^u(0)),y^u(0)]_c$ is a center segment in $[A']_c$. This property is preserved by unstable holonomy since $A$ and $A'$ are $\W^u$-saturated so $[S(x^u(t)),y^u(t)]_c$ is a center segment in $[A']_c$ for every $t\in [0,1]$. We conclude that $[S(x^u(1)),y^u(1)]_c=[S(\hat{x}),\hat{y}]_c$ needs to be a center segment in $[A']_c$ (see Figure \ref{fig1}) and this ends the proof of the lemma.
\end{proof}

We are now able to prove:

\begin{prop}\label{prop22} The sets $(A,A')_c$ and $(A',A)_c$ are disjoint open subsets of $M$. The sets $[A]_c$ and $[A']_c$ are disjoint closed subsets of $M$. 
\end{prop}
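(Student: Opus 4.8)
The plan is to establish Proposition~\ref{prop22} in three stages: disjointness, then closedness of $[A]_c$ (hence of $[A']_c$ by the symmetry $A\leftrightarrow A'$), then openness of $(A,A')_c$ (hence of $(A',A)_c$), the last stage coming from a separation statement. Two facts will be used constantly. First, by the hypothesis on $L$, every center segment of length at least $2L$ meets both $A$ and $A'$ (it contains a center ball of radius $L$ around its midpoint), so every center segment disjoint from $A'$, or from $A\cup A'$, has length at most $2L$; this is what lets me extract convergent subsequences of endpoints. Second, directly from the definitions, $[A]_c\cap A'=\emptyset$ and $[A']_c\cap A=\emptyset$. Disjointness of the four sets is nothing new: it is already asserted in the remark immediately preceding the statement, which gives the partition $M=[A]_c\cup (A,A')_c\cup [A']_c\cup (A',A)_c$.

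\emph{Closedness of $[A]_c$.} Let $p_n\in [A]_c$ with $p_n\to p$; write $p_n\in [x_n,y_n]_c$ with $x_n\le y_n$ in $A$ and $[x_n,y_n]_c\cap A'=\emptyset$. These segments have length $\le 2L$, so after passing to a subsequence $x_n\to x\in A$, $y_n\to y\in A$, the segments converge in the Hausdorff topology to $[x,y]_c$, and $p\in[x,y]_c$. It remains to rule out $[x,y]_c\cap A'\neq\emptyset$. If $z\in [x,y]_c\cap A'$ then $x<z\le y$ (as $A\cap A'=\emptyset$), so $x<S(x)\le z\le y$. Apply Lemma~\ref{lemma21} to $x_n\to x$: after a further subsequence $S(x_n)\to w\in A'$ with $[S(x),w]_c\subset [A']_c$, hence $[S(x),w]_c\cap A=\emptyset$. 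Since $S(x_n)\in A'$ while $[x_n,y_n]_c\cap A'=\emptyset$, we must have $S(x_n)>y_n$, so $w\ge y$. Then $S(x)\le y\le w$, i.e.\ $y\in [S(x),w]_c$, contradicting $y\in A$. So $[x,y]_c\cap A'=\emptyset$, $p\in [A]_c$, and $[A]_c$ is closed; by symmetry $[A']_c$ is closed.

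\emph{Openness of $(A,A')_c$.} I claim $\overline{(A,A')_c}\cap (A',A)_c=\emptyset$; granting this and its symmetric counterpart $\overline{(A',A)_c}\cap (A,A')_c=\emptyset$, the set $[A]_c\cup [A']_c\cup (A',A)_c$ is closed (it equals $[A]_c\cup [A']_c\cup \overline{(A',A)_c}$), so its complement $(A,A')_c$ is open, and likewise $(A',A)_c$ is open. To prove the claim, take $p_n\in (A,A')_c$ with $p_n\to p$; then $p_n\in (a_n,S(a_n))_c$ with $a_n\in A$, and as before (length $\le 2L$) we may assume $a_n\to a\in A$, $S(a_n)\to b$, with $[a_n,S(a_n)]_c\to [a,b]_c$, $p\in[a,b]_c$, $a\le p\le b$, and, by Lemma~\ref{lemma21}, $[S(a),b]_c\subset [A']_c$. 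Suppose towards a contradiction that $p\in(A',A)_c$, say $p\in (a',b')_c$ with $a'\in A'$, $b'\in A$ and $(a',b')_c\cap (A\cup A')=\emptyset$. From $a\in A$, $a\le p<b'$ and $(a',b')_c\cap A=\emptyset$ we get $a\le a'$, hence $S(a)\le a'$. If $b'\le b$, then $b'\in (p,b]_c$ and $b'>a'\ge S(a)$, so $b'\in [S(a),b]_c$, contradicting $[S(a),b]_c\cap A=\emptyset$ and $b'\in A$. If $b'>b$, then $a'<p\le b<b'$ forces $b\in (a',b')_c$, contradicting $b=\lim S(a_n)\in A'$ and $(a',b')_c\cap A'=\emptyset$. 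Thus $p\notin (A',A)_c$, proving the claim.

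\emph{Where the difficulty lies.} The delicate point is that the Hausdorff limits of the $A'$-avoiding center segments above live on center leaves different from the one containing the limit point, so a priori interior points of $A'$ could appear in the limit; this is exactly the issue that Lemma~\ref{lemma21}, and behind it the transport by unstable holonomy of Lemma~\ref{lemma2.1} together with the $\W^u$-minimality of $A$ and the Baire argument for continuity points of $l_S$, is designed to control. Once Lemma~\ref{lemma21} is in hand the proof of Proposition~\ref{prop22} is an elementary but order-sensitive bookkeeping in the oriented leaves of $\W^c$, and the only real risk is confusing which endpoint lies in $A$ versus $A'$ or flipping an inequality along $\phi^c$.
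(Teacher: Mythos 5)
Your proof is correct and follows essentially the same approach as the paper: both arguments reduce entirely to Lemma~\ref{lemma21} controlling what can appear in the Hausdorff limits of the $A'$-avoiding center segments, together with order bookkeeping on the oriented center leaves. The organization differs only cosmetically — the paper closes the complement $[A]_c\cup(A,A')_c\cup[A']_c$ of $(A',A)_c$ directly and then separately shows $[A]_c$ closed, whereas you first close $[A]_c$, $[A']_c$ and then verify the separation $\overline{(A,A')_c}\cap(A',A)_c=\emptyset$ — but these are equivalent rearrangements of the same argument.
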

\begin{proof}
The sets $(A,A')_c$ and $(A',A)_c$ are disjoint by definition. For proving $(A,A')_c$ open let us see that its complement, $[A]\cup (A,A')_c \cup [A']_c$, is closed. The proof of $(A',A)_c$ open is analogous.

Let $\{v_n\}_n$ be a sequence in $[A]_c\cup (A,A')_c \cup [A']_c$ converging to a point $v$. The sequence $\{v_n\}_n$ lies infinitely many times in $[A]_c\cup (A,A')_c$ or $(A,A')_c\cup [A']_c$. Suppose without loss of generality that it is the former. So, up to a subsequence, there exist $x_n\in A$ and $y_n=S(x_n)\in A'$ such that $v_n$ lies in the center segment $[x_n,y_n]_c$ for every $n$.

Then by Lemma \ref{lemma21}, up to taking a converging subsequence such that $x_n \xrightarrow{n} x$ and $y_n \xrightarrow{n} y$, the sequence $[x_n,y_n]_c$ converges in the Hausdorff topology to the center segment $[x,y]_c$ and $[S(x),y]_c$ is in $[A']_c$. 

Then $[x,y]_c=[x,S(x)]_c\cup[S(x),y]_c$ is a center segment in $[A]\cup (A,A')_c \cup [A']_c$. As the limit point $v$ needs to lie in $[x,y]_c$ this proves that $[A]\cup (A,A')_c \cup [A']_c$ is closed.

The sets $[A]_c$ and $[A']_c$ are disjoint by definition. Let us see that $[A]_c$ is closed. The proof of $[A']_c$ closed is analogous.

Let $\{w_n\}_n$ be a sequence in $[A]_c$ converging to a point $w$. Suppose that each $w_n$ is contained in a segment $[x_n,z_n]_c$ in $[A]_c$ and consider $y_n=S(x_n)$ for every $n$. Then by Lemma \ref{lemma21}, up to taking a converging subsequence such that $x_n \xrightarrow{n} x$ and $y_n \xrightarrow{n} y$, the sequence $[x_n,y_n]_c$ converges in the Hausdorff topology to the center segment $[x,y]_c$ such that $[S(x),y]_c$ is in $[A']_c$.

Up to taking another subsequence if necessary the sequence $\{z_n\}_n\subset A$ converges to a point $z\in A$ contained in $[x,y]_c$. The sequence $[x_n,z_n]_c$ converges in the Hausdorff topology to $[x,z]_c$ so the point $w$ needs to lies in $[x,z]_c$ as it is the limit of points $w_n$ in $[x_n,z_n]_c$. Moreover, since $[S(x),y]_c\cap A=\emptyset$, then $[x,z]_c$ needs to be contained in $[x,S(x)]_c$. We deduce that $[x,z]_c$ is a center segment in $[A]_c$ containing $w$. This proves that $[A]_c$ is closed in $M$.
\end{proof}

Let us end this subsection with a small parenthesis:

\begin{remark}
Let us compare with the examples in \cite{BG2} of Axiom A discretized Anosov flows having a proper attractor $\Lambda$ and a proper repeller $\Lambda'$ such that $\W^c$ is not topologically conjugate to a suspension.

In these examples, $\Lambda$ and $\Lambda'$ are minimal unstable and stable laminations, respectively, and Proposition \ref{prop1} is verified: there exist $L>0$ such that $\W^c_L(x)$ intersects $\Lambda$ and $\Lambda'$ for every $x\in M$. 

If one is tempted to imitate the present proof with $\Lambda$ and $\Lambda'$ in the place of $A$ and $A'$, it fails at the following point: By considering analogously the sets $(\Lambda,\Lambda')_c$ and $(\Lambda',\Lambda)_c$ of center segments from $\Lambda$ to $\Lambda'$ and from $\Lambda'$ to $\Lambda$, respectively, the property that is not verified is that $(\Lambda,\Lambda')_c$ and $(\Lambda',\Lambda)_c$ are open. Indeed, there exist segments in $(\Lambda,\Lambda')_c$ accumulated by segments from  $(\Lambda',\Lambda)_c$, and vice versa. The basin of $\Lambda$ or $\Lambda'$ does not decomposes in two connected components, rather $\Lambda$ and $\Lambda'$ are geometrically intertwined in such a fashion that its basins have a unique connected component.
\end{remark}

\subsection{Construction of the global section}

We can conclude from the previous subsection that $M$ decomposes as the disjoint union $$M=[A]_c\cup (A,A')_c \cup [A']_c \cup (A',A)_c,$$ where $(A,A')_c$ and $(A',A)_c$ are open and $[A]_c$ and $[A']_c$ are closed in $M$. From Proposition \ref{prop1} there exists $L>0$ such that $\W^c_L(x)\cap A\neq \emptyset$ and $\W^c_L(x)\cap A'\neq \emptyset$ for every $x\in M$. Let us see in this subsection that all this is sufficient for proving that $\W^c$ has to admit a global section.

Consider $\theta:M \to [0,1]$ continuous such that $\theta^{-1}(0)=[A]_c$ and $\theta^{-1}(1)=[A']_c$. Define next $\rho:M \to S^1$ such that
$$
\rho(x)=
\left\{
	\begin{array}{ll}
		\frac{1}{2}\theta(x) \text{ (mod 1)}  & \mbox{if } x\in [A]_c\cup(A,A')_c\cup[A']_c \\
		1-\frac{1}{2}\theta(x) \text{ (mod 1)}& \mbox{if } x\in [A']_c\cup(A',A)_c\cup[A]_c
	\end{array}
\right.
$$

\begin{remark}
The function $\rho:M \to S^1$ is well-defined and continuous.
\end{remark}
\begin{proof}
If $x$ is a point belonging both to $[A]_c\cup(A,A')_c\cup[A']_c$ and $[A']_c\cup(A',A)_c\cup[A]_c$ then $x\in [A]_c=\theta^{-1}(0)$ or $x\in [A']_c=\theta^{-1}(1)$. In both cases, $\frac{1}{2}\theta(x)$ and $1-\frac{1}{2}\theta(x)$ take the same value (mod 1). We obtain that $\rho$ is well defined. 

Since $\rho$ is a continuous function restricted to each closed subset $[A]_c\cup(A,A')_c\cup[A']_c$ and $[A']_c\cup(A',A)_c\cup[A]_c$ (they are closed as they are the complement of $(A',A)_c$ and $(A,A')_c$, respectively), and since the union of both closed subsets is $M$, then $\rho$ is continuous.
\end{proof}

Recall that $\phi^c:M\times \mathbb{R} \to M$ denotes a flow whose flow lines are the leaves of $\W^c$. Let us assume that $\phi^c$ is parametrized by arclength. Let $p:\tilde{M}\to M$ be the universal cover of $M$ and $\tilde{\phi}^c:\tilde{M}\times \mathbb{R} \to \tilde{M}$ be the lift of $\phi^c$ to $\tilde{M}$. Consider $\tilde{\rho}:\tilde{M}\to \mathbb{R}$ to be a lift of $\rho:M\to S^1$, that is, such that $\pi \circ \tilde{\rho}=\rho \circ  p$.

As $A$ and $A'$ intersect every center segment of length $2L>0$ then for every $\tilde{x}$ in $\tilde{M}$:
\begin{equation}\label{eqschwartz} \tilde{\rho}\circ \tilde{\phi^c}(\tilde{x},4L)-\tilde{\rho} \circ \tilde{\phi^c}(\tilde{x},0)>1.\end{equation}
Notice that for a given $x$ in $M$ the difference considered in (\ref{eqschwartz}) is independent of the lift $\tilde{x}$ of $x$. Informally, it measures how much `winds around' $S^1$ the image by $\rho$ of the center segment $[x,x+4L]_c$.

Now an argument of Schwartzman (see \cite{Sc} and \cite{BG1}) allows us to conclude the proof of Proposition \ref{prop2}. We reproduce it for the sake of completeness.

\begin{prop} Let $M$ be a smooth manifold and $\phi:M\times \mathbb{R} \to M$ be a flow tangent to a continuous vector field $X_\phi$ in $M$ and satisfying (\ref{eqschwartz}) for a certain continuous function $\rho:M\to S^1$ and some constant $L>0$. Then $\phi$ admits a smooth global section.
\end{prop}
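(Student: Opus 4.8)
The plan is to produce a \emph{smooth} map $g\colon M\to S^1$ that is strictly increasing along every orbit of $\phi$, in the strong sense that $dg(X_\phi)>0$ at every point of $M$; the Proposition then follows at once. Indeed $dg$ is then nowhere zero, so every $\theta_0\in S^1$ is a regular value and $N:=g^{-1}(\theta_0)$ is a closed codimension-one smooth submanifold of $M$; at each $p\in N$ one has $X_\phi(p)\notin T_pN=\ker dg_p$, so $N$ is transverse to $\phi$; and since $M$ is compact $dg(X_\phi)\ge c$ for some $c>0$, so for every $x$ the $C^1$ curve $t\mapsto g(\phi_t(x))$ lifts to a map $\mathbb{R}\to\mathbb{R}$ increasing at speed $\ge c$, hence meets a lift of $\theta_0$ within forward time $1/c$. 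Thus every orbit crosses $N$ in bounded time, i.e.\ $N$ is a smooth global section.

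\textbf{The cohomological positivity from (\ref{eqschwartz}).} Let $c:=\rho^*[\,d\theta\,]\in H^1(M;\mathbb{Z})$ be the class determined by $\rho$. Iterating (\ref{eqschwartz}) along the lifted flow on the universal cover shows that, for a lift $\tilde{\rho}$ of $\rho$ and any $\tilde{x}\in\tilde{M}$, one has $\tilde{\rho}(\tilde{\phi}_T(\tilde{x}))-\tilde{\rho}(\tilde{x})\ge \tfrac{T}{4L}-C_0$ for all $T\ge 0$, with $C_0$ independent of $\tilde{x}$ and $T$ (the sum of roughly $T/(4L)$ increments, each exceeding $1$, the remaining short piece being controlled by compactness of $M$). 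Closing up long orbit segments and applying Birkhoff's ergodic theorem, this gives that the image of $c$ in $H^1(M;\mathbb{R})$ pairs with the asymptotic cycle $A_\mu\in H_1(M;\mathbb{R})$ of \emph{every} $\phi$-invariant probability $\mu$ by $\langle c,A_\mu\rangle\ge \tfrac{1}{4L}>0$. (Equivalently, and perhaps more transparently, one may first average $\rho$ over a time window of length $4L$ along $\tilde{\phi}$ to obtain a \emph{continuous} $g_0\colon M\to S^1$ that is strictly increasing along every orbit; already its level sets are topological global sections, which is what one needs to conclude that $\phi$ is topologically equivalent to a suspension.)

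\textbf{From positivity to a smooth section.} To upgrade this to a smooth section I would invoke the duality argument of Schwartzman (\cite{Sc}, reproduced in \cite{BG1}): a Hahn--Banach separation between the open cone of continuous $1$-forms that are positive on $X_\phi$ and the closure of the affine subspace of smooth closed $1$-forms representing $c$, together with the description of the separating functionals as $\phi$-invariant foliated cycles, shows that the strict positivity $\langle c,A_\mu\rangle>0$ for all invariant $\mu$ forces $c$ to be represented by a \emph{smooth} closed $1$-form $\omega$ with $\omega(X_\phi)>0$ everywhere on $M$. As $\omega$ has integral periods, $g(x):=\int_{x_0}^{x}\omega\pmod{\mathbb{Z}}$ is a well-defined smooth map $g\colon M\to S^1$ with $dg=\omega$, and the Strategy paragraph finishes the proof. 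I expect this last step to be the main obstacle: it is exactly the part the paper ``reproduces for completeness'', and it cannot be bypassed by a naive smoothing of $g_0$, because $\phi$ is tangent only to a \emph{continuous} vector field --- the function $g_0$ is differentiable solely in the flow direction, whose direction field is not smooth, so a direct mollification need not preserve positivity of the flow-derivative, and the duality is genuinely needed.
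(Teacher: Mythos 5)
Your argument is conceptually sound but takes a genuinely different (and heavier) route than the paper, and you actually talk yourself past the more elementary trick the paper uses. You correctly observe that averaging $\rho$ along the $\phi$-flow yields only a \emph{continuous} map $g_0$ (since $X_\phi$ is merely $C^0$), and that mollifying $g_0$ need not preserve strict increase along the non-smooth flow direction; you then resolve this by invoking Schwartzman--Sullivan Hahn--Banach duality (positivity of $\langle c,A_\mu\rangle$ for every invariant $\mu$ forces a smooth closed $1$-form $\omega$ in the class $c$ with $\omega(X_\phi)>0$). The paper instead reverses the order of operations: it first replaces $X_\phi$ by a $C^0$-close \emph{smooth} vector field $X_\psi$ and $\rho$ by a $C^0$-close \emph{smooth} map $\mu$ --- this is harmless because (\ref{eqschwartz}) is an open $C^0$ condition --- and only then averages, setting $\tilde\lambda(\tilde x)=\frac{1}{4L}\int_0^{4L}\tilde\mu\circ\tilde\psi(\tilde x,t)\,dt$, which is now automatically smooth. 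The fundamental theorem of calculus gives $d\tilde\lambda(X_\psi)>\frac{1}{4L}$, a lower bound that is \emph{uniform}, independent of how close $X_\psi$ is to $X_\phi$; this uniformity is the key to the last step, since it lets one choose $X_\psi$ close enough to $X_\phi$ that the level set $N=\lambda^{-1}(0)$ remains transverse to the original continuous flow $\phi$. So the paper's proof is self-contained and purely calculus, whereas your route requires the full functional-analytic apparatus of asymptotic cycles (and, as you note, some care to check that the duality carries over to $C^0$ vector fields, since Schwartzman and Sullivan usually work in the smooth category). Both approaches reach the stated conclusion, but the ``perturb first, average second'' idea is exactly the observation you flagged as missing; it bypasses duality entirely.
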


\begin{proof}
Let $p:\tilde{M}\to M$ be the universal cover of $M$. Consider $X_\psi$ a smooth vector field $C^0$-close to $X_\phi$ and $\mu:M\to S^1$ a smooth map $C^0$-close to $\rho:M\to S^1$. Let $\tilde{\psi}:\tilde{M}\to \tilde{M}$ be the lift to $\tilde{M}$ of the flow $\psi:M\to M$ tangent to $X_\psi$ and $\tilde{\mu}:\tilde{M}\to\mathbb{R}$ be such that $\tilde{\pi}\circ \tilde{\mu}=\mu \circ p$. Then, if $X_\psi$ and $\mu$ are close enough to $X_\phi$ and $\rho$, respectively,  we still have
$$\tilde{\mu}\circ \tilde{\psi}(x,4L)-\tilde{\mu}\circ \tilde{\psi}(x,0)>1,$$
for every $x\in M$.

Let us consider now the smooth map $\tilde{\lambda}:\tilde{M} \to \mathbb{R}$ given by
$$\tilde{\lambda}(\tilde{x})=\frac{1}{4L}\int_0^{4L}\tilde{\mu}\circ\tilde{\psi}(\tilde{x},t) dt. $$
We claim that $\tilde{\lambda}$ projects to a map $\lambda:M \to S^1$. Indeed, if $\tilde{x}$ and $\tilde{y}$ are two points in $\tilde{M}$ such that $x=p(\tilde{x})=p(\tilde{y})$ then there exists an integer $n$ such that $\tilde{\mu}(\tilde{y})=\tilde{\mu}(\tilde{x})+n$. Furthermore, $n$ satisfies that $\tilde{\mu}\circ \tilde{\psi}(\tilde{y},t)=\tilde{\mu}\circ \tilde{\psi}(\tilde{x},t)+n$ for every $t$. This implies that $\tilde{\lambda}(\tilde{y})=\tilde{\lambda}(\tilde{x})+n$. We deduce that $$\lambda(x):=\tilde{\lambda}(\tilde{x})\text{ (mod 1)}$$ is well defined independently of the lift $\tilde{x}$. This proves the claim.

Moreover, for any $\tilde{x}$ in $\tilde{M}$ we have:
\begin{equation}\label{lasteq}\frac{\partial}{\partial t} \tilde{\lambda}\circ \tilde{\psi}(\tilde{x},t)|_{t=0}=\frac{1}{4L} (\tilde{\mu}\circ \tilde{\psi}(\tilde{x},4L)-\tilde{\mu}\circ \tilde{\psi}(\tilde{x},0)) > \frac{1}{4L} >0.\end{equation}
This proves that $\lambda:M\to S^1$ is a submersion such that the orbits of $\psi$ are transverse to the fibers. We obtain that $N=\lambda^{-1}(0)$ is a submanifold of $M$ that is a global section for the flow $\psi$. 

Moreover, since (\ref{lasteq}) gives us a positive lower bound (which only depends on the a priori constant $L>0$) for the angle between the vector field $X_\psi$ and the fibers of $\lambda:M\to S^1$ then we can consider $X_\psi$ to be $C^0$-close enough to $X_\phi$ so that $\phi:M\to M$ is also transverse to the fibers and $N$ is a global section for $\phi$.
\end{proof}

\section{Proof of Theorem \ref{thmA'} and Corollary \ref{corA'}}\label{section6}

Consider from now on $f(x)=\varphi_{\tau(x)}(x)$ a discretized Anosov flow such that $\varphi_t:M\to M$ is a non-transitive topological Anosov flow.

\begin{proof}[Adapting the proof of Theorem \ref{thmA} to prove Theorem \ref{thmA'}]

Let $\Lambda$ be an attracting basic piece of $\varphi_t$. As the flow $\varphi_t|_\Lambda:\Lambda \to \Lambda$ is transitive one obtains that $\W^{cs}(x)\cap \Lambda$ is dense in $\Lambda$ for every $x$ in $\Lambda$, otherwise $\overline{\W^{cs}(x)\cap \Lambda}$ would be a proper repeller for $\varphi_t|_\Lambda$. So $\W^{cs}|_\Lambda$ is minimal in $\Lambda$. As $f$ acts quasi-isometrically on $\W^c$ then Proposition \ref{prop1} adapts identically and we get that there exists $L>0$ such that every  minimal unstable lamination $A$ in $\Lambda$ verifies that $\W^c_L(x)\cap A \neq \emptyset$ for every $x\in \Lambda$.

Suppose now that there exist two different minimal unstable laminations $A$ and $A'$ in $\Lambda$. We want to show that $\varphi_t|_\Lambda$ needs to be orbit equivalent to a suspension. This will conclude the proof of Theorem \ref{thmA'}.

We can analogously define the sets $[A]_c$, $(A,A')_c$, $[A']_c$ and $(A',A)_c$ as in the proof of Proposition \ref{prop2}. The proof that $(A,A')_c$ and $(A',A)_c$ are open and that $[A]_c$ and $[A']_c$ are closed in $\Lambda$ works analogously. This allows us to define a continuous function $\rho:\Lambda \to S^1$ such that
\begin{equation}\label{eqschwartz2} |\widetilde{\rho\circ \varphi}_{4L}(x)-\widetilde{\rho\circ \varphi}_{0}(x)|>1,\end{equation}
where $t\mapsto \widetilde{\rho\circ \varphi_t}(x):\mathbb{R}\to \mathbb{R}$ is any lift of $t\mapsto \rho\circ \varphi_t(x)$ for every $x\in \Lambda$.

We can extend now $\rho$ to a small open $\varphi_t$-forward invariant neighborhood $U$ of $\Lambda$ is the following way: We can cover $\Lambda$ by $B_{\delta_1}(x_1)\cup \ldots \cup B_{\delta_j}(x_j)$ such that $x_i\in \Lambda$ and $|\rho(x)-\rho(x_i)|<1/10$ for every $x\in \Lambda\cap B_{\delta_i}(x_i)$. By Tietze extension theorem we can extend $\rho|_{\Lambda\cap  B_{\delta_i}(x_i)}$ to $\rho_i:B_{\delta_i}(x_i)\to S^1$ such that we still have $|\rho_i(x)-\rho(x_i)|<1/10$ for every $x\in B_{\delta_i}(x_i)$. Then by taking a partition of unity $\{\tau_i:B_{\delta_i}(x_i)\to [0,1]\}_i$ subordinated to $\{B_{\delta_i}(x_i)\}_i$ the functions $\{\rho_i\}_i$ can be interpolated in order to obtain an extension of $\rho$ to $B_{\delta_1}(x_1)\cup \ldots \cup B_{\delta_j}(x_j)$. 
Finally, we can take $V\subset B_{\delta_1}(x_1)\cup \ldots \cup B_{\delta_j}(x_j)$ such that $\varphi_t(V)\subset B_{\delta_1}(x_1)\cup \ldots \cup B_{\delta_j}(x_j)$ for every $t\geq 0$ and then define $U=\bigcup_{t\geq 0}\varphi_t(V)$. 

This construction of $U$ gives us that (\ref{eqschwartz2}) continues to be valid for every $x\in U$. The argument of Schwartzman also works well  restricted to $U$: by taking smooth approximations $\mu$ and $X_\psi$ of $\rho$ and $\frac{\partial \varphi_t}{\partial t}|_{t=0}$ , respectively, we can define the function $\lambda:U\to S^1$ as $\lambda(x)=\frac{1}{4L}\int_0^L\widetilde{\mu \circ \psi_t}(x)dt \text{ (mod 1)}$ and obtain that $\frac{\partial}{\partial t} \widetilde{\lambda\circ \psi_t}(x)|_{t=0}> \frac{1}{4L} >0$ for every $x\in U$. Then $\lambda^{-1}(0)$ gives us a global forward section for $\varphi_t|_U$. This global forward section gives us a global section for $\varphi_t|_\Lambda$.
\end{proof}

\begin{proof}[Proof of Corollary \ref{corA'}]
Let $f(x)=\varphi_{\tau(x)}(x)$ be a discretized Anosov flow such that $\varphi_t$ is not transitive. Let $\Lambda_1$, \ldots, $\Lambda_k$ be the attracting basic pieces of $\varphi_t$ and suppose that $\varphi_t$ is not orbit equivalent to a suspension restricted to any of this pieces.

Recall that $\W^{cs}(\Lambda_1)\cup \ldots \cup \W^{cs}(\Lambda_k)$ is an open and dense $\W^{cs}$-saturated subset of $M$. As $f$ acts quasi-isometrically on $\W^c$ then $\W^{cs}(x)=\W^s(\W^c(x))$ for every $x$ in $M$ by Corollary \ref{cor2.3}.  As each $\Lambda_i$ is $\W^c$-saturated, then $\W^{cs}(\Lambda_1)\cup \ldots \cup \W^{cs}(\Lambda_k)$ coincides with $\W^s(\Lambda_1)\cup \ldots \cup \W^s(\Lambda_k)$.

We claim that there exists $R>0$ such that $$\W^u(x)\cap \big(\W^s_R(\Lambda_1)\cup \ldots \cup \W^s_R(\Lambda_k)\big)\neq \emptyset$$ for every $x$ in $M$. Indeed, let $V_1^u$,\ldots, $V_j^u$ be a finite collection of $\W^u$ -foliation boxes such that $\bigcup_i V^u_i=M$. For every $1\leq i \leq j$ there exist $R_i>0$ such that $\W^s_{R_i}(\Lambda_1)\cup \ldots \cup \W^s_{R_i}(\Lambda_k)$ intersects every $\W^u$-plaque in $V^u_i$. The claim follows from taking $R=\max\{R_1,\ldots, R_j\}$.

As a consequence of the previous claim we obtain that $$\overline{\W^u(x)} \cap \big(\Lambda_1 \cup \ldots \cup\Lambda_k\big)\neq \emptyset$$ for every $x$ in $M$. Indeed, as $\W^u(f^{-n}(x))$ intesercts $\big(\W^s_R(\Lambda_1)\cup \ldots \cup \W^s_R(\Lambda_k)\big)$ for every $n\geq 0$ then $\W^u(x)=f^n( \W^u(f^{-n}(x)))$ is at distance $0$ from $\Lambda_1 \cup \ldots \cup\Lambda_k$. We deduce that every minimal unstable lamination for $f$ intersects $\Lambda_1 \cup \ldots \cup\Lambda_k$. 


Moreover, as each attracting basic piece is compact and $\W^u$-saturated, then every minimal unstable lamination for $f$ has to be contained in one of the attracting basic pieces.

Finally, by Theorem \ref{thmA'}, each attracting basic piece $\Lambda_i$ contains a unique minimal unstable lamination. We conclude that $f$ admits exactly $k$ minimal unstable laminations and that each one of them is contained in one of the attracting basic pieces $\Lambda_1$, \ldots, $\Lambda_k$ of $\varphi_t$.
\end{proof}

Let us finish this section with a brief sketch on how to construct an Anosov flow in the hypothesis of Corollary \ref{corA'}:

\begin{example}
Let $S$ be a negatively curved hyperbolic closed surface. Let $\varphi_t:T^1S\to T^1S$ be the geodesic flow on the unitary tangent bundle of $S$. 

Consider $\alpha$ and $\beta$ two simple, closed, oriented and disjoint geodesics in $S$. Let us see them as periodic orbits $\alpha,\beta:[0,1]\to T^1S$ of the flow $\varphi_t$. 

It is a standard procedure to make a DA-type perturbation of the vector field $\frac{\partial \varphi_t}{\partial t}|_{t=0}$ in a neighborhood of $\alpha$ in order to transform $\alpha$ into a repelling periodic orbit for a new flow $\psi_t:T^1S\to T^1S$ such that $\frac{\partial \varphi_t}{\partial t}|_{t=0}$ and $\frac{\partial \psi_t}{\partial t}|_{t=0}$ coincide outside a small neighborhood of $\alpha$.

By considering then $T$ a small toroidal neighborhood of $\alpha$ such that $\psi_t$ points inward into $T^1S\setminus T$ along the boundary $\partial T$ we obtain that the maximal invariant set of $\psi_t|_{T^1S\setminus T}$ is a connected attracting hyperbolic set $\Lambda \subset T^1S\setminus T$. By cutting out $T$ from $T^1S$ and gluing back adequatly another copy of $\psi_t|_{T^1S\setminus T}$ with the inverse orientation one can obtain a non-transitive Anosov flow with $\Lambda$ as its unique attracting basic piece (see the techniques in \cite{FW} and \cite{BBY} for all the details).

We claim now that $\psi_t|_\Lambda$ is not orbit equivalent to a suspension. Suppose by contradiction that it is. Then we can consider $\rho:\Lambda\to S^1$ such that  $\lim_{t\to +\infty}\widetilde{\rho\circ \varphi}_t(x)=+\infty$ and $\lim_{t\to -\infty}\widetilde{\rho\circ \varphi}_t(x)=-\infty$ for every $x\in \Lambda$, where $t\mapsto \widetilde{\rho\circ \varphi}_t(x):\mathbb{R}\to \mathbb{R}$ is any lift of $t\mapsto \rho\circ \varphi_t(x):\mathbb{R}\to S^1$.

We can extend $\rho$ to a small open $\varphi_t$-forward invariant neighborhood $U$ of $\Lambda$ such that $\lim_{t\to +\infty}\widetilde{\rho\circ \varphi}_t(x)=+\infty$ continues to be valid for every $x\in U$ (see the proof of Theorem \ref{thmA'} for details on how to construct such an $U$). By considering an adapted metric such that $\varphi_t$ contracts distances inside strong stable leaves for all future iterates we can take $U$ of the form $\bigcup_{x\in \Lambda}\W^s_\delta(x)$ for some $\delta>0$. In particular, $\varphi_t$ points inwards to $U$ in every point of $\partial U$. 

We can extend $\rho$ continuously to $T^1S\setminus T$ by setting $\rho(y)=\rho(\varphi_{t_y}(y))$ for every $y\in T^1S\setminus (T\cup U)$ where $t_y$ is the unique non-negative time such that $\varphi_{t_y}(y)\in \partial U$.

Now, $\beta:[0,1]\to \Lambda$ is freely homotopic to its inverse $\beta^{-1}:[0,1]\to \Lambda$ in $T^1S$ by the homotopy $\beta_s$ with $s\in[0,1]$ that for each $t$ takes $\dot{\beta}(t)$ and rotates it clockwise $s\pi$. As $\beta_s$ coincides with $\beta$ in the base $S$, we can consider $T$ sufficiently close to $\alpha$ so that this homotopy takes place inside $T^1S\setminus T$. This homotopy gives an homotopy between the curve $t\mapsto \widetilde{\rho\circ\beta}(t):\mathbb{R}\to \mathbb{R}$ that lifts $t\mapsto \rho \circ \beta(t):\mathbb{R}\to S^1$ and the curve $t\mapsto \widetilde{\rho\circ\beta^{-1}}(t):\mathbb{R}\to \mathbb{R}$ that lifts $t\mapsto \rho \circ \beta^{-1}(t):\mathbb{R}\to S^1$. This is an imposible homotopy since $\lim_{t\to +\infty}\widetilde{\rho\circ\beta}(t)=+\infty$ and $\lim_{t\to +\infty}\widetilde{\rho\circ\beta^{-1}}(t)=-\infty$. We get to a contradiction and the claim is proved.
\end{example}


\begin{thebibliography}{texttLLLL}

\bibitem[Ba15]{Bar} T. Barbot, \emph{De l'hyperbolique au globalement hyperbolique}, Habilitation à diriger des recherches, Université Claude Bernard de Lyon, 2005.

\bibitem[BFFP19]{BFFP} T. Barthelm\'e, S. Fenley, S. Frankel and R. Potrie, \emph{Partially hyperbolic diffeomorphisms homotopic to identity in dimension 3}, arXiv:1908.06227.


\bibitem[BBY17]{BBY} F. Béguin, C. Bonatti and B. Yu, Building Anosov flows on 3–manifolds, \emph{Geom. Topol.} {\bf 21} (2017), no. 3, 1837-1930.

\bibitem[BB16]{BoBo} D. Bohnet and C. Bonatti, Partially hyperbolic diffeomorphisms with a uniformly compact center foliation: The quotient dynamics, \emph{Ergodic Theory and Dynamical Systems}, {\bf 36} 4 (2016), 1067--1105.


\bibitem[BG09]{BG1} C. Bonatti and N. Guelman, Transitive Anosov flows and Axiom A diffeomorphisms, \emph{Erg. Th. and Dynam. Sys.} {\bf 29} (2009), no. 1, 817–848.

\bibitem[BG10]{BG2} C. Bonatti and N. Guelman, \emph{Axiom A diffeomorphisms derived from Anosov flows}, J. Mod. Dyn. {\bf 4} (2010), no. 1, 1–63.  MR 2643887.

\bibitem[BGHP17]{BGHP}  C. Bonatti, A. Gogolev, A. Hammerlindl and R. Potrie, Anomalous partially hyper-bolic diffeomorphisms III: abundance and incoherence,  arXiv:1706.04962.


\bibitem[BGP16]{BGP}  C. Bonatti, A. Gogolev and R. Potrie, Anomalous partially hyperbolic diffeomorphisms II: stably ergodic examples, \emph{Invent. Math.}, {\bf 206} (2016), no. 3, 801–836.

\bibitem[BPP16]{BPP}  C. Bonatti, K. Parwani and R. Potrie, Anomalous partially hyperbolic diffeomorphisms I: dynamically coherent examples, \emph{ Ann.  Sci. Éc.  Norm.  Supér. (4)} {\bf 49} (2016), no. 6, 1387–1402.

\bibitem[BW05]{BW} C. Bonatti and A. Wilkinson, Transitive partially hyperbolic diffeomorphisms on 3-manifolds, \emph{Topology} {\bf 44} (2005) 475-508.


\bibitem[CPS17]{CPS} S. Crovisier, R. Potrie and M. Sambarino \emph{Finiteness of partially hyperbolic attractors with one-dimensional center}, arXiv:1706.08684.

\bibitem[CP15]{CP} S. Crovisier and R. Potrie, \emph{Introduction to partially hyperbolic dynamics}, Trieste Lecture Notes ICTP (2015). Available in the web pages of the authors. 

\bibitem[FW80]{FW} J. Franks and B. Williams, Anomalous Anosov flows, \emph{Global theory of dynamical systems (Proc. Internat. Conf., Northwestern Univ, Evanston Ill., 1979)},  Lecture Notes in Math., {\bf 819} , Springer, Berlin (1980), 158–174.

\bibitem[G02]{Gue} N. Guelman, On the approximation of time one maps of Anosov flows by Axiom A diffeomorphisms, \emph{Bull Braz Math Soc}, {\bf 33} (2002), 75–97.

\bibitem[HP14]{HP} A. Hammerlindl and R. Potrie, Pointwise partial hyperbolicity in 3-dimensional nilmanifolds, Journal of the London Math. Society {\bf 89} (3) (2014),  853-875.

\bibitem[HP19]{HP2} A. Hammerlindl and  R. Potrie, Classification of  systems with center stable tori, \emph{Michigan Math. J.}, {\bf 68}, no 1 (2019), 147-166.

\bibitem[HHU16]{HHU}  F.  Rodriguez  Hertz,  J.  Rodriguez  Hertz and  R.  Ures,  A  non-dynamically  coherentexample on $\mathbb{T}^3$, \emph{ Ann.  Inst.  H.  Poincaré} {\bf 33} (2016), no. 4, 1023–1032.

\bibitem[HU19]{HU} J. Rodriguez Hertz and R. Ures, On the Three-Legged Accessibility Property, \emph{New Trends in One-Dimensional Dynamics. Springer Proceedings in Mathematics \& Statistics}, {\bf 285} (2019), 239-248.


\bibitem[HPS77]{HPS} M. Hirsch, C. Pugh and M. Shub, Invariant Manifolds,  \emph{Springer Lecture Notes in Math.}, {\bf 583} (1977).


\bibitem[Po13]{Po} R. Potrie, A few remarks on partially hyperbolic diffeomorphisms of T3 isotopic to Anosov, \emph{J. Dynam. Differential Equations} {\bf 26} (2014), no. 3, 805–815.

\bibitem[Sh14]{Shi} Y. Shi, Partially hyperbolic diffeomorphisms on Heisenberg nilmanifolds and holonomy
maps, \emph{C. R. Math. Acad. Sci. Paris}  {\bf 352}, no. 9, 743–747.

\bibitem[Sc57]{Sc} S. Schwartzman, Asymptotic cycles, \emph{Ann. of Math} {\bf 66} (2) (1957), 270–284.

\end{thebibliography}
\end{document}